\title{\bf{Conditioning diffusions with respect to partial observations}}
\author{Jean-Louis Marchand\\{\small\emph{IRMAR, Université Rennes 1, Campus de Beaulieu, 35042 Rennes Cedex, France}}} 
\date{}
\newtheorem{thm}{Theorem}
\newtheorem{lem}{Lemma}
\newtheorem{prop}{Proposition}
\newcommand{\e}{\mathbb{E}}
\newcommand{\re}{\mathbb{R}}
\newcommand{\n}{\mathbb{N}}
\newcommand{\tr}{\textrm{Tr}}
\newcommand{\ud}{\mathrm{d}}
\begin{document}
\maketitle

\begin{abstract}
In this paper, we prove a result of equivalence in law between a diffusion conditioned with respect to partial observations and an auxiliary process.
By partial observations we mean coordinates (or linear transformation) of the process at a finite collection of deterministic times.
 Apart from the theoritical interest, this result allows to simulate the conditioned diffusion through Monte Carlo's method, using the fact that the auxiliary process is easy to simulate. 
\\~\\
{\small\emph{Keywords: Conditioned diffusion, Partial observations, Simulation}}
\end{abstract}

\section{Introduction}

We are interessed in multidimensional diffusions solutions of stochastic differential equations (SDE's) generated by a Brownian motion.
For a $n$-dimensional diffusion solution on $[0,T]$ of the following
\begin{equation}\label{init}
\ud x_t=b_t(x_t)\ud t+\sigma_t(x_t)\ud w_t,\quad x_0=u
\end{equation} where $w$ is a $n$-dimensional Brownian  motion, it is known  (see e.g. \cite{LZ})  that its conditional law \mbox{$\mathscr{L}(x|x_T=v)$} is given by the law of a bridge process (as extension of Brownian bridge)  $y$ solution of
\begin{equation*}
\ud y_t=b_t(y_t)\ud t+\sigma_t(y_t)\ud \tilde{w}_t+\sigma_t(y_t)\sigma_t(y_t)^*\nabla_{\!\! z}\! \log p_{t,T}(z,v)\big|_{ z=y_t}\ud t,\quad y_0=u
\end{equation*}
where $\tilde{w}$ is a Brownian motion and $p_ {s,t}(z, .)$  is the density of $x_t$ knowing $x_s=z$. But in most cases this density is not explicitely known so that we are not able to simulate it easily. For practical purposes, \emph{e.g.} parameter estimation of diffusion processes, simulation of paths corresponding to the conditional law is needed.  

 In their paper \cite{DH}, B.Delyon and Y.Hu studied the following equation on  $[0,T]$
\begin{align}\label{dh}\ud y_t=b_t(y_t)\ud t-\frac{y_t-v}{T-t}\ud t+\sigma_t(y_t)\ud \tilde{w}_t,\quad y_0=u.
\end{align}where $ \tilde{w}$ is a $n$-dimensional Brownian motions.
Under adequate assumptions, the process $y$ is unique on $[0,T]$,  $\lim_{t\to T}y_t=v$, a.s. and for all positive function $f$ in $C([0,T],\re^n)$ we have
\begin{equation*}
\e[f(x)|x_T=v]=C\e[f(y)R(y)]
\end{equation*}where $R$ is a functional of whole path $y$ on $[0,T]$. The quantity $R(y)$ is computable knowing parameters $b$, $\sigma$, $T$ and $v$.
The constant $C$ is unknown, but in practice the conditional law is estimated through  
 \begin{equation*}
\e[f(x)|x_T=v]\simeq \frac{\sum_i f(y^i)R(y^i)}{\sum_i R(y^i)}
\end{equation*}where each $y^i$ is an independant sample of (\ref{dh}).
In this case, we call the process $y$  a bridge even if $y$ does not have the right targeted law. If $b= 0$ and $\sigma= I_n$ (identity $n$-dimensional matrix), the process $x$  is a $n$-dimensional Brownian motion and process $y$ is a $n$-dimensional Brownian bridge so that $C=R= 1$. 
This theorem applies in the case of more than one observation. The Markov property indeed implies that the conditional law is the tensor product of each bridge.

The aim of this paper is to extend this result to solve this problem with only partial observations. The previous remark does not apply; indeed we have to treat simultaneously all conditionings . To give an idea, let $w=(\begin{smallmatrix}w^1\\w^2\end{smallmatrix})$ be a 2-dimensional Brownian motion. The law of $w$ conditioned on $w^1_S=u$ and $w^2_T=v$ with $S<T$ is given by that one of $y$ solution of
\begin{equation*}
\ud y_t=\ud \tilde{w}_t -
\begin{pmatrix}
\frac{y_t^1-u}{S-t}\mathbf{1}_{t<S}\\\frac{y^2_t-v}{T-t}\mathbf{1}_{t<T}
\end{pmatrix}\ud t,\quad y_0=u
\end{equation*}each coordinate is a Brownian bridge.

Let us define our observations. At each deterministic positive observation time of the sequence  $0<T_1<\dots<T_k<\dots <T_N=T$, we get a partial information given by a linear transformation of $x_{T_k}$, $L_kx_{T_k}$, where  $L_k$ is a deterministic matrix in $M_{m_k,n}(\re)$  whose $m_k$ rows form an orthonormal family.
So that our aim is to be able to describe the conditional law  \mbox{$\mathscr{L}(x|(L_kx_{T_k}=v_k)_{1\leq k\leq N})$} where $v_k$ is an arbitrary  deterministic $m_k$-dimensional vector.

We define process $y$ to be the solution of
\begin{equation}\label{genpb}
\left\{
\begin{array}{ll}
\ud y_t=b_t(y_t)\ud t+\sigma_t(y_t)\ud \tilde{w}_t-\sum_{k=1}^N P_t^k(y_t) \frac{y_t-u_k}{T_k-t}\mathbf{1}_{(T_k-\varepsilon_k,T_k)}(t)\ud t\\y_0=u
\end{array}\right.
\end{equation}where for all time $t$, all vector $z$ and for all $1\leq k\leq N$, the matrix $P_t^k(z)$ is an oblique projection and $u_k$ is any vector satisfying $L_ku_k=v_k$. The correction term operates only on the interval $(T_k-\varepsilon_k,T_k)$ where $T_k-\varepsilon_k<T_k$ for technical reasons. We will show that with a good choice for those projections (see Equation (\ref{goodproj})) we have the following equivalence in law
\begin{equation*}
\begin{array}{|ccc|}\hline
&&\\
&\mathscr{L}(x|(L_kx_{T_k}=v_k)_{1\leq k\leq N})\sim \mathscr{L}(y) &\\
&&\\\hline
\end{array}
\end{equation*}with an explicit density (Theorem~\ref{th} below).

\bigskip
In this paper a first part is devoted to the study of general bridges which will provide us the good candidate whose law is absolutely continuous with respect to targeted one. The second one provides the main result. Some properties and proofs are postponed in the appendix to ease the reading. 
\paragraph{Notations} For the sake of readibility, we choose not to specify arguments when not necessary. For example (\ref{init}) becomes
\begin{equation*}
\ud x_t=b_t\ud t+\sigma_t\ud w_t
\end{equation*}
For all $z$, the matrix $a_t(z)$ is defined by
\begin{equation*}
a_t(z)
=\sigma_t(z)\sigma_t(z)^*
\end{equation*}we suppose that there exists a positive number $\rho$ such that for all $(t,z)$
\begin{equation*}
\rho^{-1}I_n<a_t(z)<\rho I_n
\end{equation*}in the sense of symmetric matrices, where $I_n$ is the $n$- dimensional identity matrix.
The function $a^{-1}$ is defined by
\begin{equation*}
\begin{array}{ccll}
a^{-1} : &[0,T]\times\re^n&\rightarrow &M_n(\re^n)\\
&(t,x) &\mapsto &\big(a_t(x)\big)^{-1}
\end{array}
\end{equation*}
We define the infimum of all the $\varepsilon_k$
\begin{equation*}
\varepsilon_0=min_k\{\varepsilon_k\}
\end{equation*}

\section{Bridges and bridges approximations} 
\subsubsection*{Bridges}

We recall that a bridge is defined as a solution of (\ref{genpb})
\begin{equation*}
\left\{
\begin{array}{ll}
\ud y_t=b_t(y_t)\ud t+\sigma_t(y_t)\ud \tilde{w}_t-\sum_{k=1}^N P_t^k(y_t) \frac{y_t-u_k}{T_k-t}\mathbf{1}_{(T_k-\varepsilon_k,T_k)}(t)\ud t\\y_0=u_0
\end{array}\right.
\end{equation*}
 We assume that the deterministic parameters $b$ and $\sigma$ are $C^{1,2}_b$ functions (bounded with bounded derivatives).
We assume that
\begin{equation*}
(t,z)\mapsto P_t^k(z)
\end{equation*}is a $C_b^{1,2}$ function and that for any $z$
\begin{equation}\label{propproj}
L_kP_t^k(z)=L_k\quad \textrm{and}\quad \ker(L_k)=\ker(P^k_t(z))
\end{equation}
First of all, a lemma to describe the behaviour of process $y$
\begin{lem}\label{cvbridge}
The SDE (\ref{genpb}) admits a unique solution on $[0,T]$ in the absolute convergence's sense meaning that
  \begin{align*}
\int_{T_k-\varepsilon_k}^{T_k}\frac{\|P_t^k(y_t)(y_t-u_k)\|}{T_k-t}\ud t<+\infty
\end{align*}
For all $k$ we have $L_ky_{T_k}=L_ku_k$ almost surely (a.s.). Moreover for $T_k-\varepsilon_k<t<T_k$, $\|L_k(y_t-u_k)\|\leq C_k(\omega)(T_k-t)\log\log [(T_k-t)^{-1}+e]$ a.s., where $C_k$ is a positive random variable.
\end{lem}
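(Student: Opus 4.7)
I would handle $[0,T]$ interval by interval. On the complement of $\bigcup_k(T_k-\varepsilon_k,T_k)$ the drift and diffusion in (\ref{genpb}) are $C^{1,2}_b$, so classical SDE theory yields a unique strong solution. The work is concentrated on each singular window $(T_k-\varepsilon_k,T_k)$, where by shrinking $\varepsilon_k$ one may assume no other correction term is active. The decisive observation is that (\ref{propproj}) gives $L_kP_t^k(y_t)=L_k$, so applying $L_k$ to (\ref{genpb}) shows that $z_t:=L_k(y_t-u_k)$ satisfies the \emph{linear} bridge equation
\begin{equation*}
\ud z_t=L_kb_t(y_t)\,\ud t+L_k\sigma_t(y_t)\,\ud\tilde{w}_t-\frac{z_t}{T_k-t}\,\ud t.
\end{equation*}

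\paragraph{Explicit representation and iterated logarithm.} The integrating factor $1/(T_k-t)$ gives
\begin{equation*}
z_t=(T_k-t)\left[\tfrac{z_{T_k-\varepsilon_k}}{\varepsilon_k}+\int_{T_k-\varepsilon_k}^t\tfrac{L_kb_s(y_s)}{T_k-s}\,\ud s+M_t\right],\quad M_t:=\int_{T_k-\varepsilon_k}^t\tfrac{L_k\sigma_s(y_s)}{T_k-s}\,\ud\tilde{w}_s.
\end{equation*}
Boundedness of $b$ makes the first two bracketed terms $O(|\log(T_k-t)|)$. For $M_t$, the ellipticity bound $\rho^{-1}I_n<a<\rho I_n$ forces its bracket to grow like $(T_k-t)^{-1}$, and Dambis--Dubins--Schwarz together with Khinchin's law of the iterated logarithm for Brownian motion at infinity yields $\|M_t\|\leq C(\omega)\sqrt{(T_k-t)^{-1}\log\log((T_k-t)^{-1}+e)}$ a.s. Multiplying by $(T_k-t)$ gives an iterated-log control of $\|z_t\|$ and in particular $z_t\to0$, which is exactly $L_ky_{T_k}=L_ku_k$ a.s.

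\paragraph{Recovering the full singular drift.} The second half of (\ref{propproj}) says $\ker P_t^k(z)=\ker L_k$; together with $\operatorname{rank}L_k=m_k=\operatorname{rank}P_t^k(z)$, this makes $L_k$ bijective from $\operatorname{range}P_t^k(z)$ onto $\re^{m_k}$, with inverse depending smoothly and boundedly on $(t,z)$ by the $C^{1,2}_b$ hypothesis on $P^k$. Hence $\|P_t^k(y_t)(y_t-u_k)\|\leq C\|z_t\|$, and the iterated-log bound gives $\int_{T_k-\varepsilon_k}^{T_k}\|P_t^k(y_t)(y_t-u_k)\|/(T_k-t)\,\ud t<+\infty$ a.s., establishing absolute convergence. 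The ``orthogonal'' piece $(I-P_t^k(y_t))(y_t-u_k)\in\ker L_k$ is untouched by the singular drift and obeys a regular SDE with $C^{1,2}_b$ coefficients, hence is continuous at $T_k$, providing a restart value past $T_k$. Uniqueness on the singular window follows by applying the same representation to the difference of two solutions, which solves the bridge SDE with vanishing boundary value.

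\paragraph{Main obstacle.} The genuine subtlety is that the equation for $z_t$ is \emph{not} autonomous: $L_kb_t(y_t)$ and $L_k\sigma_t(y_t)$ still depend on the full state $y_t$. The uniform bounds on $b,\sigma$ save the day: the explicit representation and the LIL argument go through verbatim without any fixed-point step in $y$, and once $\|z_t\|$ is controlled the oblique projection converts this into the required bound on the singular drift.
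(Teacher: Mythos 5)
Your proposal is correct and follows essentially the same route as the paper: reduce to one observation window, use $L_kP_t^k=L_k$ to turn $L_k(y_t-u_k)$ into a linear bridge equation (the paper phrases this as Itô applied to $L(y_t-u_1)/(T-t)$, i.e.\ the same integrating factor), then Dambis--Dubins--Schwarz plus the law of the iterated logarithm for the bound and the hitting of the target, with piecewise local-Lipschitz theory giving uniqueness and concatenation. Your explicit bound $\|P_t^k(y_t)(y_t-u_k)\|\leq C\|L_k(y_t-u_k)\|$ just makes precise the absolute-convergence step the paper leaves implicit.
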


\begin{proof}
Let us remark that for times in $[T_{k-1},T_k]$ (with $T_0=0$) the SDE (\ref{genpb}) becomes
\begin{equation*}
\ud y_t=b_t\ud t-P_t^k\frac{y_t-u_k}{T_k-t}\mathbf{1}_{(T_k-\varepsilon_k,T_k)}(t)\ud t+\sigma(y_t)\ud \tilde{w}_t
\end{equation*}
So that we may reduce the proof to the study of (\ref{genpb}) with only one observation time, but we here have to consider random initial conditions. If unicity holds it will lead to the result by concatenation. The proof in the case $N=1$ is given in the appendix with Lemma~\ref{monobridconv}.
\end{proof}
\subsubsection*{Bridges approximations}
We now introduce approximations that will be useful in the proof of the main result in next section.
Let $0<\varepsilon<\varepsilon_0$, we set
\begin{align}\label{approxb}\ud y_t^\varepsilon=b_t^\varepsilon(y_t^\varepsilon)\ud t+\sigma_t(y_t^\varepsilon)\ud \tilde{w}_t-\sum_kP_t^k(y_t^\varepsilon)\frac{y^\varepsilon_t-u_k}{T_k-t}\mathbf{1}_{(T_k-\varepsilon_k,T_k-\varepsilon)}(t)\ud t,\quad y_0^\varepsilon=u_0
\end{align}
The only difference with the Bridge Equation~(\ref{genpb}) is that each correction term is stopped from a distance $\varepsilon$ from the observation time. 
\begin{lem}\label{convl2}
There exists a constant $0<\kappa<1$ such that
\begin{equation*}
\sup_{t\in[0,T]}\e[\|y_t^\varepsilon-y_t\|^2]\leq C \varepsilon^\kappa
\end{equation*}for all $0<\varepsilon<(\varepsilon_0 \wedge 1)$ where $C$ is a positive constant. The numbers $C$ and $\kappa$ depend on $T$, $N$, the $(\varepsilon_k)_k$, the $(A_k)_k$, and the bounds for $b$ and $\sigma$.
\end{lem}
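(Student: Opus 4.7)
The plan is to apply Itô's formula to $Z_t:=y_t-y_t^\varepsilon$ and propagate an $L^2$ estimate interval by interval, using that the two SDE's differ only on $\bigcup_k(T_k-\varepsilon,T_k)$. As a starting point, on any subinterval of $[0,T]$ on which neither projection correction is active, the SDE's (\ref{genpb}) and (\ref{approxb}) share identical coefficients; by pathwise uniqueness $y_t=y_t^\varepsilon$ there, so $Z_t\equiv 0$ on $[0,T_1-\varepsilon]$. Hence $\e[\|Z_t\|^2]$ is only generated on the \emph{gap} intervals $I_k:=(T_k-\varepsilon,T_k)$ (on which only $y$ feels the $k$-th correction), and must then be transported across the intermediate intervals $J_{k+1}:=(T_{k+1}-\varepsilon_{k+1},T_{k+1}-\varepsilon)$ (both processes feel the same correction) and the no-correction intervals $[T_k,T_{k+1}-\varepsilon_{k+1}]$.

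On a gap $I_k$, Itô's formula combined with the Lipschitz control of $b$ and $\sigma$ yields
\begin{equation*}
\frac{\ud}{\ud t}\e[\|Z_t\|^2]\leq C\,\e[\|Z_t\|^2]+\frac{2}{T_k-t}\,\e\bigl[\|Z_t\|\cdot\|P_t^k(y_t)(y_t-u_k)\|\bigr].
\end{equation*}
The property~(\ref{propproj}) implies that $L_k$ restricted to $\mathrm{range}(P_t^k(z))$ is an isomorphism onto $\re^{m_k}$ with bounded inverse (uniformly in $(t,z)$, by $C^{1,2}_b$-regularity of $P^k$), so $\|P_t^k(z)(z-u_k)\|\leq C\|L_k(z-u_k)\|$. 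The auxiliary process $X_t:=L_k(y_t-u_k)$ then satisfies on $(T_k-\varepsilon_k,T_k)$ the linear bridge equation $\ud X_t=L_kb_t\,\ud t+L_k\sigma_t\,\ud\tilde{w}_t-X_t/(T_k-t)\,\ud t$ (since $L_kP_t^k=L_k$); the integrating factor $(T_k-t)^{-1}$ linearises it and an Itô-isometry computation gives $\e[\|X_t\|^2]\leq C(T_k-t)$. Plugging back, Cauchy--Schwarz and Grönwall produce $\sup_{t\in I_k}\e[\|Z_t\|^2]\leq e^{CT}(\e[\|Z_{T_k-\varepsilon}\|^2]+C\varepsilon)$, i.e.\ an additive gain of order $\varepsilon$ per gap.

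On a ``both active'' interval $J_{k+1}$, decomposing $P_t^{k+1}(y_t)(y_t-u_{k+1})-P_t^{k+1}(y_t^\varepsilon)(y_t^\varepsilon-u_{k+1})=P_t^{k+1}(y_t)Z_t+[P_t^{k+1}(y_t)-P_t^{k+1}(y_t^\varepsilon)](y_t^\varepsilon-u_{k+1})$ and using Lipschitz-ness leads to
\begin{equation*}
\frac{\ud}{\ud t}\e[\|Z_t\|^2]\leq\Bigl(C+\frac{C'}{T_{k+1}-t}\Bigr)\e[\|Z_t\|^2],
\end{equation*}
so Grönwall produces a multiplicative factor $\varepsilon^{-C'}$. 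The no-correction intervals contribute only a bounded multiplicative constant (ordinary Lipschitz Grönwall). Combining the three pieces and iterating on $k=1,\dots,N$ yields $\sup_t\e[\|Z_t\|^2]\leq C\varepsilon^{1-C'}$, and any $\kappa\in(0,1-C')$ works.

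The principal obstacle is precisely the interplay between the additive $\varepsilon$-gain obtained on each gap and the multiplicative $\varepsilon^{-C'}$ blow-up suffered on each ``both active'' interval: this is what forces $\kappa$ to lie strictly below $1$. Producing a meaningful $\kappa>0$ requires a careful tracking of the dependence of $C'$ on the operator norms of $P^k$ and of $(y^\varepsilon-u_k)$, possibly supplemented by a bootstrap via Burkholder--Davis--Gundy type $L^p$ estimates on $X_t$, so as to make sure that $1-C'>0$ uniformly in $\varepsilon$.
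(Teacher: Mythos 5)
Your treatment of the ``gap'' intervals $(T_k-\varepsilon,T_k)$ is fine and essentially reproduces what the paper gets from Lemma~\ref{monoctrls}: the a priori bound $\e[\|L_k(y_t-u_k)\|^2]\leq C(T_k-t)$ turns the singular term into a $(T_k-t)^{-1/2}$ coefficient and yields an additive gain of order $\varepsilon$. The genuine gap is exactly where you stop: on the intervals where both processes feel the same correction you apply Gr\"onwall with a coefficient $C'/(T_{k+1}-t)$ and obtain the factor $\varepsilon^{-C'}$, and your conclusion $\kappa\in(0,1-C')$ is vacuous unless $C'<1$. There is no reason for that: $C'$ involves the operator norms of the oblique projections $P^{k+1}=aL^*(LaL^*)^{-1}L$ (which are $\geq 1$ and in general strictly larger), the Lipschitz constant of $z\mapsto P^{k+1}_t(z)$ multiplied by the size of $y^\varepsilon_t-u_{k+1}$ (which does not become small, since $y^\varepsilon$ is not forced to hit the constraint), and the ellipticity constants; generically $C'\gg 1$, so the bound $\varepsilon^{1-C'}$ proves nothing. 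You acknowledge this (``make sure that $1-C'>0$'') but give no mechanism to achieve it, and the bootstrap you allude to (BDG estimates on $X_t$) does not change the exponent produced by Gr\"onwall against a $1/(T-t)$ coefficient.

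The paper's proof supplies precisely the missing mechanism, and it is not a refinement of your Gr\"onwall step but a different comparison. It introduces on each $[T_{k-1},T_k)$ an auxiliary \emph{bridge} $y^k$ started at $y^\varepsilon_{T_{k-1}}$, so that $y^\varepsilon$ and $y^k$ coincide up to $T_k-\varepsilon$ and differ only on the gap (giving the $O(\varepsilon)$ term), while the error inherited from earlier intervals is propagated by comparing the two bridges $y^k$ and $y$ with different initial data. For that comparison Lemma~\ref{dist} proves a H\"older-type stability: $\e[\|y^k_t-y_t\|^2]\leq C\e[\|y^k_{T_{k-1}}-y_{T_{k-1}}\|^2]^\alpha$ with $\alpha\in(0,1)$ \emph{independent of} $\varepsilon$. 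The trick there is to split the interval at $T_k-h$, run the crude Gr\"onwall (factor $(T/h)^{C_2}$) only up to $T_k-h$, use the a priori estimate (\ref{monoctrl1}) on $(T_k-h,T_k]$ to replace the $1/(T_k-t)$ coefficient by $(E_t+1)/\sqrt{T_k-t}$, and then optimize $h$ as a function of the initial discrepancy $E_0$; the blow-up $(T/h)^{C_2}$ is traded against the smallness of $E_0$, giving $CE_0^{1/(2C_2\vee C_4+1)}$. Iterating over $k$ then gives $\e[\|y^\varepsilon_{T_k}-y_{T_k}\|^2]\leq C^k\varepsilon^{\alpha^{k-1}}$ and hence a positive (though geometrically deteriorating) exponent $\kappa$. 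Without an argument of this type --- either the paper's auxiliary bridge plus Lemma~\ref{dist}, or an equivalent $h$-optimization performed directly on your $Z_t$ using the a priori bound for \emph{both} $y$ and $y^\varepsilon$ on the interval where their corrections are active --- your proof does not establish the lemma.
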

\begin{proof}
Given in the appendix, the proof uses classical techniques and auxiliary processes each defined on $[T_{k-1},T_k]$.
\end{proof}
\section{Result in the case of partial observation}
\subsubsection*{Case where $b$ is bounded}
We aim to  obtain a Delyon\&Hu-type theorem that gives absolute continuity of process $x$ solution of (\ref{init}) conditioned on observations $(L_kx_{T_k}=v_k)_{1\leq k\leq N}$ with respect to a bridge process $y$ solution of (\ref{genpb}). 

We now consider a peculiar projection $P$, for all $k$ and $z$
\begin{equation}\label{goodproj}
P_t^k(z)=a_t(z)L_k^*(L_ka_t(z)L_k^*)^{-1}L_k
\end{equation}
We set
\begin{equation*}
A^k_t(z)=(L_ka_t(z)L_k^*)^{-1}
\end{equation*}
and also
\begin{equation*}
\beta_t(z)^k=\sigma_t(z)^*L_k^*A_t^k(z) \quad\textrm{and}\quad \eta_k(z)=\sqrt{\det(A_t^k(z))}
\end{equation*}Let us remark that 
\begin{equation}\label{trick}
\beta_t^k(z)^*\beta_t^k(z)=A_t^k(z)\quad \textrm{and}\quad L_k\sigma_t(z)\beta_t^k(z)=I_{m_k}
\end{equation}where $I_{m_k}$ is the identity $m_k-$dimensional matrix.
Here are both systems we now consider
\begin{align}
\ud x_t&=b_t(x_t)\ud t+\sigma_t(x_t)\ud w_t, \quad x_0=u\nonumber\\
\ud y_t&=b_t(y_t)\ud t+\sigma_t(y_t)\ud \tilde{w}_t-\sum_{k=1}^N\sigma_t(y_t)\beta_t^k(y_t)\frac{L_ky_t-v_k}{T_k-t}\mathbf{1}_{(T_k-\varepsilon_k,T_k)}\ud t, \quad y_0=u\label{bridge}
\end{align} 

The result is the following
\begin{thm}\label{th}Suppose $b$, $\sigma$ and $a^{-1}$ to be  $C_b^{1,2}$-functions. Then for any bounded continuous function $f$
\begin{multline}
\e[f(x)|(L_kx_{T_k}=v_k)_{1\leq k\leq N}]
\\=C\e\Big [ f(y)\prod_{k=1}^N \eta_k(y_{T_k})\exp \big\{-\frac{\|\beta^k_{T_k-\varepsilon_k}(L_ky_{T_k-\varepsilon_k}-v_k)\|^2}{2 \varepsilon_k}+\int_{T_k-\varepsilon_k}^{T_k}-\frac{(L_ky_s-v_k)^*L_kb_s(y_s)\ud s}{T_k-s}\\
-\frac{(L_ky_s-v_k)^*\ud\left(A_t^k(y_t)\right)(L_ky_s-v_k)}{2(T_k-s)}-\sum_{1\leq i,j\leq m_k} \frac{\ud\big\langle A^k(y_.)_{i,j},(L_ky_.-v_k)_i(L_ky_.-v_k)_j\big\rangle_s}{2(T_k-s)} \big\}\Big ]
\end{multline}where $C$ is a positive constant.

\end{thm}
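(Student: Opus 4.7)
My strategy follows the regularization/Girsanov approach of Delyon--Hu \cite{DH}, adapted to the partial-observations setting by exploiting the algebraic identities $\beta^{k*}\beta^k = A^k$ and $L_k\sigma\beta^k = I_{m_k}$ built into the choice (\ref{goodproj}); the terminal limits are then controlled by Lemmas~\ref{cvbridge} and~\ref{convl2}.

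As a first step I regularize the observation: with $\phi_\delta$ the centred $\mathbb{R}^{m_k}$-Gaussian density of covariance $\delta I_{m_k}$,
\[
\e\big[f(x)\mid (L_k x_{T_k} = v_k)_k\big] \;=\; \frac{1}{Z}\,\lim_{\delta\to 0}\e\!\left[f(x)\prod_{k=1}^{N}\phi_\delta(L_k x_{T_k} - v_k)\right],
\]
where $Z$ is the joint density of $(L_k x_{T_k})_k$ at $(v_k)_k$, to be absorbed in $C$. Because $y^\varepsilon$ has bounded drift, Girsanov's theorem provides a density $D^\varepsilon$ with
\[
\e\!\left[f(x)\prod_k\phi_\delta(L_k x_{T_k} - v_k)\right] \;=\; \e\!\left[f(y^\varepsilon)\,D^\varepsilon\prod_k\phi_\delta(L_k y^\varepsilon_{T_k} - v_k)\right],
\]
and $\log D^\varepsilon$ is the sum over $k$ on $[T_k-\varepsilon_k,T_k-\varepsilon]$ of a stochastic integral with integrand $(L_k y^\varepsilon-v_k)^*\beta^{k*}(y^\varepsilon)/(T_k-t)$ against $\ud\tilde w_t$ and a singular Lebesgue term $\tfrac12(L_k y^\varepsilon-v_k)^*A^k(y^\varepsilon)(L_k y^\varepsilon-v_k)/(T_k-t)^2\,\ud t$. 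Neither term admits a limit as $\varepsilon\to 0$ on its own.

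The decisive step is an Itô rearrangement. I apply Itô's formula to
\[
F^k_t \;=\; \frac{(L_k y^\varepsilon_t - v_k)^*A_t^k(y^\varepsilon_t)(L_k y^\varepsilon_t - v_k)}{T_k - t}
\]
on $[T_k-\varepsilon_k,T_k-\varepsilon]$ and substitute the resulting identity back into $\log D^\varepsilon$. The martingale part of $\ud F^k_t$ reproduces twice the stochastic integral in $\log D^\varepsilon$ (via $L_k\sigma\beta^k=I_{m_k}$), while the bridge-correction drift combined with the derivative of $1/(T_k-t)$ reproduces twice the singular Lebesgue piece (via $\beta^{k*}\beta^k=A^k$); after substitution both singular contributions cancel. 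What remains in $\log D^\varepsilon$ is: a lower boundary term $-\tfrac12 F^k_{T_k-\varepsilon_k} = -\|\beta^k_{T_k-\varepsilon_k}(L_k y^\varepsilon_{T_k-\varepsilon_k}-v_k)\|^2/(2\varepsilon_k)$, an upper boundary term $+\tfrac12 F^k_{T_k-\varepsilon}$, a logarithmic piece $-\tfrac{m_k}{2}\log(\varepsilon_k/\varepsilon)$ arising from the trace $\tr(A^k L_k a L_k^*)\,\ud t = m_k\,\ud t$, and the three finite-variation path functionals appearing in the statement (the $L_k b$ drift, the $\ud A^k$ integral, and the covariation bracket $\langle A^k,(L_k y-v)(L_k y-v)\rangle$).

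Finally I pass to the two limits in order. Sending $\delta\to 0$ at fixed $\varepsilon$, the mollifier $\phi_\delta(L_k y^\varepsilon_{T_k}-v_k)$ converges to the conditional density of $L_k y^\varepsilon_{T_k}$ given $\mathcal{F}_{T_k-\varepsilon}$ evaluated at $v_k$; since $y^\varepsilon$ has bounded drift on $[T_k-\varepsilon,T_k]$, this density is Gaussian with mean $L_k y^\varepsilon_{T_k-\varepsilon}$ and covariance $\varepsilon(A_{T_k}^k)^{-1}$ up to $o(1)$, equal to $(2\pi\varepsilon)^{-m_k/2}\eta_k(y^\varepsilon_{T_k})\exp(-F^k_{T_k-\varepsilon}/2)(1+o(1))$ at $v_k$. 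The negative Gaussian exponent cancels the upper boundary term $+\tfrac12 F^k_{T_k-\varepsilon}$, the prefactor $(2\pi\varepsilon)^{-m_k/2}$ combines with $\exp(-\tfrac{m_k}{2}\log(\varepsilon_k/\varepsilon))$ into a $\delta,\varepsilon$-independent constant absorbed in~$C$, and only the factor $\eta_k(y^\varepsilon_{T_k})$ of the statement survives. Sending $\varepsilon\to 0$, Lemma~\ref{convl2} provides $L^2$-convergence $y^\varepsilon\to y$ and Lemma~\ref{cvbridge}'s almost-sure $\log\log$ estimate ensures integrability of the remaining path functionals, so dominated convergence produces the stated formula. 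The principal obstacle is precisely this cancellation programme: none of the three singular quantities (the stochastic integral in $\log D^\varepsilon$, the $1/(T_k-t)^2$ Lebesgue term, and the Gaussian-collapse as $\delta\to 0$) admits a limit in isolation, and only the algebraic identities encoded in (\ref{goodproj}), together with the sharp $\log\log$ control of $\|L_k y_t-v_k\|$, conspire to cancel every divergence.
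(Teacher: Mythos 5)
Your overall skeleton --- Girsanov between $x$ and the truncated bridge $y^\varepsilon$, then an It\^o rearrangement of $(L_ky^\varepsilon_t-v_k)^*A^k_t(L_ky^\varepsilon_t-v_k)/(T_k-t)$ so that the stochastic integral and the $1/(T_k-t)^2$ Lebesgue term in the log-density cancel, leaving the boundary term at $T_k-\varepsilon_k$, a boundary term at $T_k-\varepsilon$, the $m_k/(T_k-t)$ trace term and the three finite-variation functionals --- is exactly the computation the paper performs (there it is written on the $x$ side, which is the same identity read in the other direction). The cancellation mechanism and the role of the identities $\beta^{k*}\beta^k=A^k$, $L_k\sigma\beta^k=I_{m_k}$ are correctly identified. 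The two limiting steps, however, contain genuine gaps. First, your $\delta\to0$ step requires that, conditionally on $\mathscr{F}_{T_k-\varepsilon}$, the density of $L_ky^\varepsilon_{T_k}$ at $v_k$ equal $(2\pi\varepsilon)^{-m_k/2}\eta_k\exp(-F^k_{T_k-\varepsilon}/2)(1+o(1))$ with an error uniform enough to pass inside the expectation. That is a uniform local-limit (parametrix) statement: Aronson's estimates give two-sided Gaussian \emph{bounds} with different constants, not asymptotics, and the frozen-coefficient Gaussian approximation is only accurate at the diffusive scale $\|L_ky^\varepsilon_{T_k-\varepsilon}-v_k\|=O(\sqrt\varepsilon)$, whereas the expectation sees all starting points. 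The paper avoids this entirely by keeping the Gaussian collapse on the $x$ side: the kernel $\psi^\varepsilon$ acts as an approximate delta against the continuous transition densities, and Lemma~\ref{condexp} needs only Aronson bounds for domination plus continuity, never sharp asymptotics.

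Second, and more seriously, the final step ``sending $\varepsilon\to0$ \dots dominated convergence produces the stated formula'' is precisely where the paper spends most of its effort, and you have not exhibited a dominating function. The functional $\varphi^\varepsilon$ is an exponential of integrals whose integrands are of order $\|L_ky^\varepsilon_t-v_k\|^2/(T_k-t)$, including a stochastic-integral part coming from $\ud A^k_t(y_t)$; the almost-sure $\log\log$ estimate of Lemma~\ref{cvbridge} gives pathwise finiteness (this is Lemma~\ref{convint}) but no integrable majorant, and $L^2$ convergence from Lemma~\ref{convl2} gives only a.s.\ convergence along a subsequence. To exchange limit and expectation the paper must prove $\e[|\varphi^{\varepsilon_i}-\varphi|]\to0$ (Lemma~\ref{unifcv}), and this requires the two-sided bound $c_1\le C^\varepsilon\e[\psi^\varepsilon]\le c_2$ of Proposition~\ref{psibnd} (so that the normalizing constants neither blow up nor vanish, which your constant $C$ also implicitly needs), the stopping-time construction $\tau^\varepsilon$ with the parameter $J$, the Markov-property factorization $F_1F_2F_3$ with the ODE comparison for $\theta_t$, uniform integrability of $\varphi^\varepsilon\mathbf{1}_{T=\tau^\varepsilon}$ via Novikov, and finally Scheff\'e's lemma. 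Without an argument of this type (or some other proof of uniform integrability of $\varphi^\varepsilon$), the concluding limit in your proposal is unjustified.
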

\begin{proof}
This one consists in using approximations $y^\varepsilon$ solutions of (\ref{approxb}) of process $y$ solution of (\ref{bridge}). Thanks to Girsanov's theorem, we are able to obtain an equality for all bounded continuous function $f$
\begin{equation*}
\e[f(x)G^\varepsilon(x)]=\e[f(y^\varepsilon)H^\varepsilon(y^\varepsilon)]
\end{equation*}
where $G^\varepsilon/H^\varepsilon$ is the density given by Girsanov's theorem. We want to  prove that with a good choice for $G^\varepsilon$ and $H^\varepsilon$, the lefthand member of the  last inequality converges to the conditional expectation, and the righthand one converges to what  appears in the Theorem~\ref{th}.

We set for all $z\in\re^n$
\begin{equation*}
h^\varepsilon_t(z)=\sum_{k=1}^N\beta_t^k(z)\frac{v_k-L_kz}{T_k-t}\mathbf{1}_{(T_k-\varepsilon_k,T_k-\varepsilon)}(t)
\end{equation*}
Then for all bounded continuous function $f$
\begin{equation*}
\e[f(y^\varepsilon)]=\e[f(x)\exp\{-\int_0^Th^\varepsilon_t(x_t)^*\ud w_t+\frac 1 2 \|h^\varepsilon_t(x_t)\|^2\ud t\}]
\end{equation*}
We are looking for a different expression of the argument of the exponential function. We use Itô's formula for $T_k-\varepsilon_k< t< T_k-\varepsilon$ and use (\ref{trick}) to get
\begin{multline*}
\ud\left( \frac{\|\beta_t^k(x_t)(L_kx_t-v_k)\|^2}{T_k-t}\right)=\frac{2(L_kx_t-v_k)^*A_t^k(x_t)L_k\ud x_t}{T_k-t}+\frac{\|\beta_t^k(x_t)(L_kx_t-v_k)\|^2}{(T_k-t)^2}\ud t+\frac{m_k}{T_k-t}\ud t\\
+\frac{(L_kx_t-v_k)^*\ud\big(A_t^k(x_t)\big)(L_kx_t-v_k)}{T_k-t}+\sum_{1\leq i,j\leq m_k}\frac{\ud\big\langle A^k_{i,j}(x_.),(L_kx_.-v_k)_i(L_kx_.-v_k)_j\big\rangle_t}{T_k-t}
\end{multline*}
The $k^\textrm{th}$ term of $(h^\varepsilon_t)^*\ud w_t$ coming from that one in $\ud x_t$ is now isolated 
\begin{multline}\label{girsterm}
-\frac{2(L_kx_t-v_k)^*A_t^k\sigma_t\ud w_t}{T_k-t}-\frac{\|\beta_t^k(L_kx_t-v_k)\|^2}{(T_k-t)^2}\ud t= - \ud \left(\frac{\|\beta_t^k(L_kx_t-v_k)\|^2}{T_k-t}\right)+\frac{m_k}{T_k-t}\ud t\\
+\frac{2(L_kx_t-v_k)^*A_t^kb_t\ud t}{T_k-t}+\frac{(L_kx_t-v_k)^*\ud A_t^k(L_kx_t-v_k)}{T_k-t}+\sum_{1\leq i,j\leq m_k}\frac{\ud\big\langle A_{i,j}^k,(L_kx-v_k)_i(L_kx-v_k)_j\big\rangle_t}{T_k-t}
 \end{multline} 
 Since we have 
 \begin{equation*}
\|h_t^\varepsilon\|^2\ud t=\sum_k \frac{\|\beta_t^k(L_kx_t-v_k)\|^2}{(T_k-t)^2}\mathbf{1}_{(T_k-\varepsilon_k,T_k-\varepsilon)}(t)\ud t
 \end{equation*} 
 and
 \begin{equation*}
(h_t^\varepsilon)^*\ud w_t=-\sum_k\mathbf{1}_{(T_k-\varepsilon_k,T_k-\varepsilon)}(t)\frac{(L_kx_t-v_k)^*A_t^k\sigma_t\ud w_t}{T_k-t}
 \end{equation*} 
 we obtain $-2(h_t^\varepsilon)^*\ud w_t-\|h_t^\varepsilon\|^2\ud t$ adding the terms given by (\ref{girsterm}). Finally, it leads us to a new expression for the density given by Girsanov's theorem
 \begin{multline*}
\e[f(y^\varepsilon)] =\e\Big[\exp\big\{\sum_{k=1}^N- \frac{\|\beta_{T_k-\varepsilon}^k(L_kx_{T_k-\varepsilon}-v_k)\|^2}{2 \varepsilon}+\frac{\|\beta_{T_k-\varepsilon_k}^k(L_kx_{T_k-\varepsilon_k}-v_k)\|^2}{2 \varepsilon_k}\\+\int_{T_k-\varepsilon_k}^{T_k-\varepsilon}
\frac{(L_kx_t-v_k)^*A_t^kb_t\ud t}{T_k-t}+\frac{m_k}{2(T_k-t)}\ud t\\+\frac{(L_kx_t-v_k)^*\ud A_t^k(L_kx_t-v_k)}{2(T_k-t)}+\sum_{1\leq i,j\leq m_k}\frac{\ud\big\langle A_{i,j}^k,(L_kx-v_k)_i(L_kx-v_k)_j\big\rangle_t}{2(T_k-t)}\big\}\Big]
 \end{multline*} 
 In an equivalent way, even if it means changing $f$
 \begin{equation}\label{ident}
\e[f(y^\varepsilon)\varphi^\varepsilon]=\e[f(x)\psi^\varepsilon]
 \end{equation} with
 \begin{multline}\label{phie}
\varphi^\varepsilon:=\varphi^\varepsilon(y^\varepsilon)=\prod_{k=1^N}\varepsilon_k^{-\frac{m_k}{2}}\eta^\varepsilon_k(y_{T_k-\varepsilon}^\varepsilon)\exp\big\{\sum_{k=1}^N- \frac{\|\beta_{T_k-\varepsilon_k}^k(L_ky^\varepsilon_{T_k-\varepsilon_k}-v_k)\|^2}{2 \varepsilon_k}+\int_{T_k-\varepsilon_k}^{T_k-\varepsilon}-
\frac{(L_ky^\varepsilon_t-v_k)^*A_t^kb_t\ud t}{T_k-t}\\-\frac{(L_ky^\varepsilon_t-v_k)^*\ud A_t^k(L_ky^\varepsilon_t-v_k)}{2(T_k-t)}-\sum_{1\leq i,j\leq m_k}\frac{\ud\big\langle A_{i,j}^k,(L_ky^\varepsilon-v_k)_i(L_ky^\varepsilon-v_k)_j\big\rangle_t}{2(T_k-t)}\big\}
 \end{multline} and
 \begin{equation}\label{psi}
\psi^\varepsilon=C^\varepsilon\prod_{k=1}^N \eta_k^\varepsilon(x_{T_k-\varepsilon})\exp\{-\frac{\|\beta_{T_k-\varepsilon}^k(x_{T_k-\varepsilon})(L_kx_{T_k-\varepsilon}-v_k)\|^2}{2 \varepsilon}\}
 \end{equation}where
 for all $z\in\re^n$
 \begin{equation*}
\eta^\varepsilon_k(z)=\sqrt{\det\big(A_{T_k-\varepsilon}^k(z)\big)}\quad \textrm{and} \quad C^\varepsilon=\prod_k\varepsilon^{-\frac{m_k}{2}}
  \end{equation*}  
  Now using it in the case where $f=1$, we get formally
  \begin{equation*}
\frac{\e[f(y^\varepsilon)\varphi^\varepsilon]}{\e[\varphi^\varepsilon]}=\frac{\e[f(x)\psi^\varepsilon]}{\e[\psi^\varepsilon]}
  \end{equation*}  the fact that this quantity is finite is given by Proposition~\ref{psibnd}. The fact that the righthand term converges to the conditional expectation is given by Lemma~\ref{condexp} in the appendix. The proof relies essentially on the use of Aronson's estimates that provides gaussian bounds for transition probabilities.
   
   The main difficulty of the proof consists in showing  almost sure convergence and then uniform one for the $\varphi^\varepsilon$. An obvious candidate for the limit is
   \begin{multline}\label{phi}
\varphi=\prod_{k=1}^N \varepsilon_k^{-\frac{m_k}{2}}\eta_k(y_{T_k})\exp\big\{-\frac{\|\beta_{T_k-\varepsilon_k}^k(y_{T_k-\varepsilon_k})(L_ky_{T_k-\varepsilon_k}-v_k)\|^2}{2 \varepsilon_k}+\int_{T_k-\varepsilon_k}^{T_k} -\frac{(L_ky_t-v_k)^*A_t^kL_kb_t(y_t)\ud s}{T_k-t}\\-\frac{(L_ky_t-v_k)^*d\big(A_t^k(y_t)\big)(L_ky_t-v_k)}{2(T_k-t)}-\sum_{1\leq i,j\leq m_k}\frac{\ud\big\langle A_{i,j}^k(y_.),(L_ky_.-v_k)_i(L_ky_.-v_k)_j\big\rangle_t}{2(T_k-t)}\big\}
   \end{multline}   
Thanks to Lemma~\ref{convint} given in the appendix, $\varphi$ is well defined.
 As said before, we want to prove the following
 \begin{lem}\label{unifcv}There exists a decreasing sequence $(\varepsilon_i)_{i\in\n}$ tending to 0 such that
 \begin{equation*}
\lim_{i\to \infty} \e\left[\|\varphi^{\varepsilon_i}-\varphi\|\right]=0
\end{equation*}
 \end{lem} 
 \begin{proof}
The proof is decomposed into two main parts. First one aims at showing the almost sure convergence of $\varphi^{\varepsilon_i}$. In second part we prove that $\e[\varphi^{\varepsilon_i}]$ tends to $\e[\varphi]$. Finally to conclude, we will use Scheffé's lemma.

For almost sure convergence, we first use triangular inequality
\begin{equation*}
|\varphi^\varepsilon(y^\varepsilon)-\varphi(y)|\leq |\varphi^\varepsilon(y^\varepsilon)-\varphi^\varepsilon(y)|+|\varphi^\varepsilon(y)-\varphi(y)|
\end{equation*}The second one converges to 0, this is given by Lemma~\ref{convint}. We now treat the term $|\varphi^\varepsilon(y^\varepsilon)-\varphi^\varepsilon(y)|$.
\begin{multline*}
\frac{\varphi^\varepsilon(y^\varepsilon)}{\varphi^\varepsilon(y)}=\prod_{k=1
}^N \frac{\eta_k^\varepsilon(y^\varepsilon_{T_k-\varepsilon})}{\eta_k^\varepsilon(y_{T_k-\varepsilon})}\exp\Big\{-\frac{\|\beta_{T_k-\varepsilon_k}^k(y^\varepsilon_{T_k-\varepsilon_k})(L_ky^\varepsilon_{T_k-\varepsilon_k}-v_k)\|^2-\|\beta_{T_k-\varepsilon_k}^k(y_{T_k-\varepsilon_k})(L_ky_{T_k-\varepsilon_k}-v_k)\|^2}{2 \varepsilon_k}\\+\int_{T_k-\varepsilon_k}^{T_k-\varepsilon}-\frac{(L_ky_t^\varepsilon-v_k)^*A_t^k(y_t^\varepsilon)L_kb_t(y_t^\varepsilon)-
(L_ky_t-v_k)^*A_t^k(y_t)L_kb_t(y_t)}{T_k-t}\ud t\\-\frac{(L_ky_t^\varepsilon-v_k)^*\ud\big(A_t^k(y_t^\varepsilon)\big)(L_ky^\varepsilon_t-v_k)-
(L_ky_t-v_k)^*\ud\big(A_t^k(y_t)\big)(L_ky_t-v_k)}{2(T_k-t)}\\
-\sum_{i,j}\frac{\ud\big\langle A_{i,j}^k(y^\varepsilon_.),(L_ky^\varepsilon_.-v_k)_i(L_ky^\varepsilon_.-v_k)_j\big\rangle_t-\ud\big\langle A_{i,j}^k(y_.),(L_ky_.-v_k)_i(L_ky_.-v_k)_j\big\rangle_t}{2(T_k-t)}
\Big\}
\end{multline*}
We can write it respecting the order above
\begin{equation*}
\frac{\varphi^\varepsilon(y^\varepsilon)}{\varphi^\varepsilon(y)}\stackrel{Notation}{=}\prod_{k=1}^N \Xi^\varepsilon_k\exp\{\Upsilon_k^\varepsilon+\Psi_k^\varepsilon+\Theta^\varepsilon_k+\Phi_k^\varepsilon\}
\end{equation*}

According to Lemma \ref{convl2} there exists a decreasing sequence $(\varepsilon_i)_{i\in \n}$ tending to 0 satisfying for all $k$ that $y^{\varepsilon_i}_{T_k-\varepsilon_i}$ converges almost surely to $y_{T_k}$.
From this we obtain the fact that $\Xi^{\varepsilon_i}_k$ converges almost surely to 1 and $\Upsilon_k^{\varepsilon_i}$ to 0 using regularity of $\sigma$. Then for all $k$
\begin{equation*}
|\Psi_k^{\varepsilon}|\leq \int_{T_k-\varepsilon_k}^{T_k-\varepsilon}\left|\frac{L_k(y_t^{\varepsilon}-y_t)^*A_t^k(y_t^{\varepsilon})L_kb_t(y_t^{\varepsilon})}{T_k-t}\right|+\left|\frac{(L_ky_t-v_k)^*\big(A_t^k(y_t^{\varepsilon})L_kb_t(y_t^{\varepsilon})-
A_t^k(y_t)L_kb_t(y_t)\big)}{T_k-t}\right|\ud t
\end{equation*}
Since $b$ and $\sigma$ are bounded we use Lemma \ref{cvbridge} to get 
\begin{equation*}
|\Psi_k^{\varepsilon}|\leq C\left(\int_{T_k-\varepsilon_k}^{T_k-\varepsilon}\frac{\|y_t^\varepsilon-y_t\|^2}{T_k-t}\ud t\right)^{\frac{1}{2}}\left(\int_{T_k-\varepsilon_k}^{T_k-\varepsilon}\frac{(1+\log \log\big((T_k-t)^{-1}+e\big)}{T_k-t}\ud t\right)^{\frac{1}{2} }
\end{equation*}where $C$ and $C'$ are positive random variables.
Thanks to Lemma \ref{convl2}, up to  an extracted subsequence
\begin{equation*}
\lim_{i\to\infty}\int_{T_k-\varepsilon_k}^{T_k-\varepsilon_i}\frac{\|y^{\varepsilon_i}_t-y_t\|^2}{T_k-t}\ud t=0
\end{equation*}that leads us to convergence for all $k$ of $|\Psi_k^{\varepsilon_i}|$ to 0.
Now we use Identity (\ref{aexpansion})
\begin{multline*}
\Theta_k^\varepsilon=\frac{\|Ly_t-v\|^2}{T-t}p_t(y_t)\ud t+\frac{\|Ly_t-v\|^2}{T-t}q_t(y_t)\ud w_t+\frac{\|Ly_t-v\|^2}{(T-t)^2}r_t(y_t)\ud t\\-\frac{\|Ly^\varepsilon_t-v\|^2}{T-t}p_t(y^\varepsilon_t)\ud t-\frac{\|Ly^\varepsilon_t-v\|^2}{T-t}q_t(y^\varepsilon_t)\ud w_t-\frac{\|Ly^\varepsilon_t-v\|^2}{(T-t)^2}r_t(y^\varepsilon_t)\ud t
\end{multline*}where $p$, $q$, and $r$ are all $C_b^{1,2}$ functions. Hence
using Lemmas \ref{cvbridge} and \ref{convl2} as above we obtain that $\lim_{\varepsilon_i\to0}|\Theta_k^{\varepsilon_i}|=0$ up to a subsequence.
It remains to treat the term $\Phi_k^\varepsilon$. Still using Identity (\ref{aexpansion}), Lemmas \ref{cvbridge} and \ref{convl2} we show that $\lim_{\varepsilon_i\to0}|\Phi_k^{\varepsilon_i}|=0$ even if it means extracting once more a subsequence.
We have obtained almost sure convergence of $\varphi^\varepsilon$ to $\varphi$.
Then we show the convergence of the expectations. For this we set a preliminary result
\begin{prop}\label{psibnd}
There exist two positive constants $c_1$ and $c_2$ such that for all $0<\varepsilon<\varepsilon_0$
\begin{equation*}
c_1\leq C^\varepsilon\e[\psi^\varepsilon]\leq c_2
\end{equation*}
 \end{prop} 
\begin{proof}
We give an explicit expression
\begin{equation*}
C^\varepsilon\e[\psi^\varepsilon]=\int q^\varepsilon(\zeta_1,\dots,\zeta_N)\prod_{k=1}^N\eta_k^\varepsilon(\zeta_k)^{-\frac{m_k}{2}}\exp\{-\frac{\|\beta_{T_k-\varepsilon}^k(\zeta_k)(L_k\zeta_k-v_k)\|^2}{2 \varepsilon}\} \ud \zeta_k
\end{equation*}
where $q^\varepsilon$ is the density of $(x_{T_1-\varepsilon},\dots,x_{T_N-\varepsilon})$. Under theorem's assumptions $x$ is a strong Markov process, with positive transition density. For $s,t\in[0,T]$, we denote $p_{s,t}(u,z)$ the density of $x_t^{s,u}$ solution of (\ref{init}) initialized to be $u$ at time $s$. Then thanks to Aronson's estimates there exist positive constant $\mu$, $\lambda$, $M$ and  $\Lambda$ such that the density $p$ satisfies for $s<t$
\begin{equation*}
\mu(t-s)^{-\frac n 2}e^{\frac{-\lambda\|z-u\|^2}{t-s}}\leq p_{s,t}(u,z)\leq M(t-s)^{-\frac n 2}e^{\frac{-\Lambda\|z-u\|^2}{t-s}}
 \end{equation*} 
 Now using $p$ we are able to write 
 \begin{equation*}
q^\varepsilon(\zeta_1,\dots,\zeta_N)=p_{0,T_1-\varepsilon}(u,\zeta_1)\dots p_{T_{N-1}-\varepsilon,T_N-\varepsilon}(\zeta_{N-1},\zeta_N)
 \end{equation*} Then we apply Aronson's estimates and the fact that for all $i,j$ the coordinate $A^k_{i,j}$ is bounded by two positive constants. We obtain bounds for $C^\varepsilon\e[\psi^\varepsilon]$ of the type
 \begin{equation*}
\lambda^{-1} C^\varepsilon\int  \exp \{\sum_{j=1}^N \frac{-\lambda \|L_k\zeta_k-v_k\|^2}{2 \varepsilon}-\frac{\lambda \|\zeta_1-u\|^2}{T_1-\varepsilon}-\sum_{k=2}^N\frac{\lambda \|\zeta_k-\zeta_{k-1}\|^2}{T_k-T_{k-1}}\}\prod_{k=1}^N \ud \zeta_k
 \end{equation*} 
 where $\lambda$ is a positive constant large for the lower bound and small for the upper one. The integral can  be interpreted as a gaussian expectation where 
 \begin{equation*}
\begin{pmatrix}
\zeta_1\\\zeta_2-\zeta_1\\\vdots\\\zeta_N-\zeta_{N-1}
\end{pmatrix}
 \end{equation*} is a centered gaussian vector with covariance matrix
 \begin{equation*}
R^\varepsilon=\frac{1}{2\lambda}
\begin{pmatrix}
(T_1-\varepsilon)I_n &0 &\dots &0\\
0  &(T_2-T_1)I_n &\ddots &\vdots\\
\vdots & \ddots &\ddots &0\\
0 &\dots &0 &(T_N-T_{N-1})I_n
\end{pmatrix}
 \end{equation*} where $I_n$ is the $n$-dimensional identity matrix. As a remark, in the sense of symmetric matrices, there exist two positive constants $c_1$ and $c_2$ such that
 \begin{equation*}
c_1 I_{Nn}<R^\varepsilon<c_2I_{Nn}
 \end{equation*} 
 where $I_{Nn}$ is the $Nn$-dimensional identity matrix. Thus the gaussian vector
 \begin{equation*}
\begin{pmatrix}
\zeta_1\\\vdots\\\zeta_N
\end{pmatrix}
 \end{equation*} admits for covariance matrix
 \begin{equation*}
\Gamma^\varepsilon=G^{-1}R^\varepsilon G^{-*}
 \end{equation*} where
 \begin{equation*}
G=\begin{pmatrix}
I_n &0 &\dots &\dots &0\\
-I_n &\ddots &\ddots & &\vdots\\
0 &\ddots &\ddots &\ddots &\vdots\\
\vdots &\ddots &\ddots &\ddots &0\\
0 &\dots &0 &-I_n &I_n
\end{pmatrix}
 \end{equation*} 
 We still keep bounds for the covariance matrix 
 \begin{equation*}
c_1I_{Nn}<\Gamma^\varepsilon<c_2 I_{Nn}
 \end{equation*} Now we can get bounds for $C^\varepsilon\e[\psi^\varepsilon]$ with expectations of type
 \begin{equation*}
\lambda^{\frac{m_k} 2} C^\varepsilon\e[\exp\{-\sum_{k=1}^N \frac{\lambda \| L_kX_k-v_k\|^2}{2 \varepsilon}\}]
 \end{equation*} where $X_k$ is a $n$-dimensional gaussian variable. Then we use Lemma \ref{randvar} given in the appendix to obtain the fact that
 \begin{equation*}\begin{array}{cll}
\re^{m_1}\times\dots\times\re^{m_N} &\rightarrow &\re\\
(v_1,\dots,v_N) &\mapsto &C^\varepsilon\prod_{k=1}^N\lambda^{\frac{m_k}2} \e[\exp\{-\frac{\lambda \| L_kX_k-v_k\|^2}{2 \varepsilon}\}]\end{array}
 \end{equation*} 
 is a gaussian density of a variable $(L_1X_1+\sqrt{\frac \varepsilon \lambda}Y_1,\dots,L_NX_N+\sqrt{\frac \varepsilon \lambda}Y_N)$ where the $Y_k$ are $m_k$-dimensional centered normalized gaussian vectors. Moreover the two families $(X_k)_k$ and $(Y_k)_k$ are independant. Finally using bounds obtained above for $\Gamma^\varepsilon$  we get the fact that for all $0<\varepsilon<\varepsilon_0$
 \begin{equation*}
c_1< C^\varepsilon\e[\psi^\varepsilon]<c_2
 \end{equation*} 
\end{proof}
As a first consequence, thanks to identity (\ref{ident}), $\e[\varphi^\varepsilon]$ is finite  so that $\frac{\varphi^\varepsilon}{\e[\varphi^\varepsilon]}$ is a density. We may also use Fatou's lemma to get
\begin{equation*}
\e[\varphi] \leq \liminf_{\varepsilon\to 0}\e[\varphi^\varepsilon]\leq c_2 
\end{equation*}
It takes more work to control $\limsup_{\varepsilon\to 0}\e[\varphi^\varepsilon]$.
 
 Let $J>0$ be a large number, we introduce for all process $(z_t)_{t\in[0,T]}$ and for all $1\leq k\leq N$ the stopping time $\tau_k^\varepsilon$
 \begin{align*}
\tau_k^\varepsilon&=\inf \{t_k<t\leq T_k-\varepsilon\,:\,\frac{1}{\sqrt{T_k-t}}\exp\{-\frac{\|L_kz_t-v_k\|^2}{2(T_k-t)}D\}\leq J^{-1}\}\\
&=\inf\{t_k<t\leq T_k-\varepsilon\,:\,\frac{\|L_kz_t-v_k\|^2}{2(T_k-t)}\geq D^{-1}\log\left(\frac{J}{\sqrt{T_k-t}}\right)\}
 \end{align*} 
 where $D$ is a positive constant such that $DI_d\leq A_k$. We know that such a constant exists according to assumptions on the function $a$. The $t_k$ are chosen to be real numbers contained in $(T_{k-1},T_k-\varepsilon)$. As a convention we set $\tau_k^\varepsilon=T_k$ if the condition is empty. Let $\tau^\varepsilon$ be the first of the $\tau_k^\varepsilon$ such that the condition is non-empty
 \begin{equation*}
\tau^\varepsilon=\inf_k\{\tau_k^\varepsilon\,:\,\tau_k^\varepsilon<T_k\}
 \end{equation*} we set as convention $\tau^\varepsilon=T$ if if for all $k$, $\tau^\varepsilon_k=T_k$. Even if it means changing $f$ in Equation (\ref{ident})
 \begin{equation*}
\frac{\e[f(y^\varepsilon)\mathbf{1}_{\tau^\varepsilon<T}\varphi^\varepsilon]}{\e[\varphi^\varepsilon]}=\frac{\e[f(x)\mathbf{1}_{ \tau^\varepsilon<T}C^\varepsilon\psi^\varepsilon]}{\e[C^\varepsilon\psi^\varepsilon]}
 \end{equation*} We recall that
 \begin{equation*}
C^\varepsilon\psi^\varepsilon=\prod_k \varepsilon^{- \frac{m_k}{2}}\exp\{-\frac{\|\beta_{T_k-\varepsilon}^k(L_kx_{T_k-\varepsilon}-v_k\|^2}{2 \varepsilon}\}
 \end{equation*}
 We now consider to be on set $\{ \tau^\varepsilon=\tau_k^\varepsilon\}$\\
 \begin{center}\setlength{\unitlength}{1cm}
\begin{picture}(4.3,0)(-2.5,2)
\put(-5,2.4){\line(1,0){9}}
\put(-2,1.9){$t_{k}$}
\put(0,2.6){$\tau^\varepsilon$}
\put(3,1.9){$T_{k}$}
\put(-2,2.2){\line(0,1){.3}}
\put(3,2.2){\line(0,1){.4}}
\put(-4,2.2){\line(0,1){0.4}}
\put(2, 2.2){\line(0, 1){.4}}
\put(0,2.2){\line(0,1){.3}}
\put(-4.5,1.9){$T_{k-1}$}
\put(1.5,1.9){$T_k-\varepsilon$}
\multiput(-7,2.4)(0.2,0){10}{\line(1,0){.1}}
\multiput(4,2.4)(0.2,0){10}{\line(1,0){.1}}
\end{picture}
\end{center} 
We decompose $C^\varepsilon\psi^\varepsilon$ into three parts as a product of three factors
\begin{align*}
F_1&=\prod_{j<k} \varepsilon^{-\frac {m_j} 2}\exp\{-\frac{\|\beta_{T_j-\varepsilon}^j(L_jx_{T_j-\varepsilon}-v_j)\|^2}{2 \varepsilon}\}\\
F_2&=\varepsilon^{-\frac {m_k} 2}\exp\{-\frac{\|\beta_{T_k-\varepsilon}^k(L_kx_{T_k-\varepsilon}-v_k)\|^2}{2 \varepsilon}\}\\
F_3&=\prod_{j>k} \varepsilon^{-\frac {m_j} 2}\exp\{-\frac{\|\beta_{T_j-\varepsilon}^j(L_jx_{T_j-\varepsilon}-v_j)\|^2}{2 \varepsilon}\}
\end{align*}
We are interessed in 
\begin{equation*}
\e[C^\varepsilon\psi^\varepsilon\mathbf{1}_{\tau^\varepsilon=\tau^\varepsilon_k}]=\e[F_1F_2F_3\mathbf{1}_{\tau^\varepsilon=\tau^\varepsilon_k}]
\end{equation*}
We now use Markov's property to get independance between Past and Future knowing Present (cf \cite{DM} see last chapter about conditional expectations)
\begin{align}\label{fact}
\e[C^\varepsilon\psi^\varepsilon\mathbf{1}_{\tau^\varepsilon=\tau^\varepsilon_k}]&=\e[F_1\e[F_2F_3\mathbf{1}_{\tau^\varepsilon=\tau^\varepsilon_k}|\tau_k^\varepsilon,x_{\tau_k^\varepsilon}]]=\e[F_1\e[F_2\mathbf{1}_{\tau^\varepsilon=\tau^\varepsilon_k}|\tau_k^\varepsilon,x_{\tau_k^\varepsilon}]\e[F_3|x_{\tau_k^\varepsilon}]]\\&=\e[F_1\e[F_2\mathbf{1}_{\tau^\varepsilon=\tau^\varepsilon_k}|\tau_k^\varepsilon,x_{\tau_k^\varepsilon}]\e[F_3|x_{T_k-\varepsilon}]]
\end{align}
In order to study the factor $\e[F_2\mathbf{1}_{\tau^\varepsilon=\tau^\varepsilon_k}|\mathscr{F}_{\tau_k^\varepsilon}]$ we introduce
\begin{equation*}
\theta_t= \frac{1}{\sqrt{T_k-t}}\exp\{-\frac{\|\beta_t^{k}(L_kx_t-v_k)\|^2}{2(T_k-t)}\}
\end{equation*}
For $t\in [T_{k-1},T_k-\varepsilon]$, we set $z_t=L_kx_t-v_k$, $p_t=\|\beta_t^k(L_kx_t-v_k)\|$. We recall that $\beta_t^k=\sigma_t^*L_k^*A^k_t$ with
 $A_t^k=(\beta_t^k)^*\beta_t^k=(L_ka_t L_k^*)^{-1}$.
With respect to these notations, we have
\begin{equation*}
p_t^2=z_t^*A_t^k z_t
\end{equation*}It is also easy to see that
\begin{equation*}
\ud z_t=L_kb_t\ud t+L_k\sigma_t\ud w_t
\end{equation*}and then $\ud\langle z\rangle_t=L_ka_t L_k^*\ud t=(A_t^k)^{-1}\ud  t$. We use Itô's formula
\begin{align*}
\ud (p_t^2)= \ud (z_t^* A_t^k z_t)= 2z_t A^k_t\ud z_t + z_t^* \ud A^k_t z_t+\sum_{i,j}\ud \langle A^k_{i,j},z_i z_j\rangle_t + m_k\ud t
\end{align*}
Then 
\begin{equation*}
\ud \frac{p_t^2}{T_{k}-t}=\frac{2z_tA^k_t\ud z_t}{T_{k}-t}+\frac{p_t^2\ud t}{(T_{k}-t)^2}+\frac{z_t^*\ud A^k_tz_t}{T_k-t}+\frac{m_k\ud t}{T_k-t}+\frac{\sum_{i,j}\ud \langle \Delta_{i,j},z_iz_j\rangle_t}{T_k-t}
\end{equation*}
First using definitions of $z$, $\beta^k$ and $A^k$ we get
\begin{align*}
z_t^*A_t^k\ud z_t&=z_t^*A_t^kL_k\ud x_t=z_s^*A_t^kL_k\sigma_t\sigma_t^{-1}\ud x_t\\
&=z_t^*(\beta_t^k)^*\sigma_t^{-1}b_t\ud t+z_t^*(\beta_t^k)^*\ud w_t
\end{align*}
This leads us to the existence of two bounded adapted processes $r^{(1)}$ and $r^{(2)}$ defined on $[T_{k-1},T_k-\varepsilon]$ such that
\begin{equation*}
z_t^*A_t^k\ud z_t=p_t r^{(1)}_t\ud t+p_t  r^{(2)}_t\ud w_t
\end{equation*}In a same way we remark that there exist two bounded adapted processes $r^{(3)}$ and $r^{(4)}$ such that
\begin{equation*}
\ud (L_ka_tL_k^*)=r^{(3)}_t\ud t+r^{(4)}_t\ud w_t
\end{equation*}we get even if it means changing $r^{(3)}$ and $r^{(4)}$
\begin{align*}
z_t^*\ud A_t^kz_t&=z_t^*\big(\ud (L_ka_tL_k^*)^{-1}\big)z_t=p_t^2r^{(3)}_t\ud t+ p_t^2r^{(4)}_t\ud w_t
\end{align*}
Finally, we obtain existence of two bounded adapted processes $r$ and $r'$ such that
\begin{equation*}
\ud \frac{p_t^2}{T-t}=\frac{2p_t\ud w_t}{T-t}+\frac{p_t^2\ud t}{(T-t)^2}+r_t\frac{p^2_t}{T-t}\ud w_t+\frac{\ud t}{T-t}+r'_t\frac{p_t^2+p_t}{T-t}\ud t
\end{equation*}From this we deduce that quadratic variation
\begin{equation*}
\ud \left\langle\frac{p_t^2}{T_k-t}\right\rangle=\frac{4p_t^2+r_t^2p_t^4+4r_tp_t^3}{(T_k-t)^2}\ud t
\end{equation*}
Now we apply Itô's formula to  the function $\theta$ always for $t\in[T_{k-1},T_k-\varepsilon]$
\begin{equation*}
\ud \theta_t=\frac{\theta_t\ud t}{2(T_k-t)}-\frac{1}{2}\theta_t\ud \left(\frac{p_t^2}{T_k-t}\right)+\frac{1}{8}\theta_t\ud \left\langle\frac{p_t^2}{T_k-t}\right\rangle
\end{equation*}
We deduce from the three last equations after simplification of four terms that there exists a martingale $M$ and a bounded adapted process $r''$ both defined on $[T_{k-1},T_k-\varepsilon]$ such that
\begin{equation*}
\ud \theta_t=\ud M_t+\theta_tr''_t\left(\frac{p_t^2+p_t}{T_k-t}+\frac{p_t^4+p_t^3}{(T_k-t)^2}\right)\ud t
\end{equation*}
For any $\eta>0$, functions $x\mapsto e^{-\eta\frac{z^2}{2}}|z|^m$ for $m=1,2,3,4$ are all bounded, then there exists a constant $c_\eta$ such that
\begin{equation*}
(\sqrt{T_k-t}\,\theta_t)^\eta\left(\frac{p_t^2+p_t}{T_k-t}+\frac{p_t^4+p_t^3}{(T_k-t)^2}\right)\leq\frac{c_\eta}{\sqrt{T_k-t}}
 \end{equation*} This gives us the existence of a bounded adapted process $\pi$ defined on $[T_{k-1},T_k-\varepsilon]$ that allows us to write
 \begin{equation*}
\ud \theta_t=\ud M_t+\theta_t^{1-\eta}(T_k-t)^{-h}\alpha_t\ud t
 \end{equation*} with $h=\frac{1+\eta}{2}$.
 We now integrate it for $t\in (\tau_k^\varepsilon, T_k-\varepsilon]$
\begin{equation*}
\theta_t=\theta_{\tau_k^\varepsilon}+M_t-M_{\tau_k^\varepsilon}+\int_{\tau_k^\varepsilon}^t\pi_s\theta_s^{1-\eta}(T_{k}-s)^{-h}\ud s
 \end{equation*} 
This leads to the following
\begin{equation*}
\e[\theta_t\mathbf{1}_{\tau_k^\varepsilon <t}]\leq J^{-1}+\bar{\pi}\int_{t_k}^t\e[\theta_s\mathbf{1}_{\tau_k^\varepsilon <s}]^{1-\eta}(T_{k}-s)^{-h}\ud s
 \end{equation*} where $\bar{\pi}=\sup_s|\pi_s|$. So  $\e[\theta\mathbf{1}_{\tau_k^\varepsilon<t}]$ is bounded by the solution $u$ of
 \begin{equation*}
\ud u_s=\bar{\alpha}u_s^{1-\eta}(T_{k}-s)^{-h}\ud s,\quad u_{t_k}=J^{-1}
 \end{equation*} 
 and this equation has an explicit solution
 \begin{equation*}
u_t=\left\{\frac{\eta\bar{\alpha}}{1-h}[(T_{k}-t_k)^{1-h}-(T_{k}-t)^{1-h}]+J^{-\eta}\right\}^{\frac{1}{\eta}}\leq\left\{c_k(T_{k}-t_k)^{1-h}+J^{-\eta}\right\}^{\frac{1}{\eta}}
 \end{equation*} where $c_k$ is a positive constant. Then for all $t\in [T_{k-1}-T_k-\varepsilon]$
 \begin{equation*}
\e[\theta_t\mathbf{1}_{t>\tau_k^\varepsilon}|\tau_k^\varepsilon,x_{\tau_k^\varepsilon}]\leq \{c_k(T_k-t_k)^{1-h}+J^{-\eta}\}^{\eta}
 \end{equation*} In particular when $t=T_k-\varepsilon$
 \begin{equation*}
\e[F_2\mathbf{1}_{t>\tau_k^\varepsilon}|\tau_k^\varepsilon,x_{\tau_k^\varepsilon}]=\e[\theta_{T_k-\varepsilon}\mathbf{1}_{t>\tau_k^\varepsilon}|\tau_k^\varepsilon,x_{\tau_k^\varepsilon}]\leq \{c_k(T_k-t_k)^{1-h}+J^{-\eta}\}^{\eta}
 \end{equation*} 
 We now come back to equation (\ref{fact}), we get a first bound
 \begin{equation*}
\e[C^\varepsilon\psi^\varepsilon\mathbf{1}_{\tau^\varepsilon=\tau_k^\varepsilon}]\leq \{c_k(T_k-t_k)^{1-h}+J^{-\eta}\}^{\eta}\e[F_1\e[F_3|x_{T_k-\varepsilon}]]
  \end{equation*}
 In order to treat the factor $\e[F_3|x_{T_k-\varepsilon}]$ we use Aronson's estimate to get
  \begin{equation*}
\e[F_3|\mathscr{F}_{T_k-\varepsilon}]\leq G \int  \prod_{j>k} \frac{1}{\sqrt{\varepsilon}}\exp\{-\frac{ \|\beta^j_{T_j-\varepsilon}(L_j\zeta_j-v_j)\|^2}{2\varepsilon}\} \frac{1}{T_j-T_{j-1}}\exp\{-\Lambda_j\frac{|\zeta_j-\zeta_{j-1}|^2}{T_j-T_{j-1}}\}\ud \zeta_j 
  \end{equation*}where $G$ is a positive constant.  We just have to use Lemma \ref{randvar} given in the appendix to obtain an positive constant upper bound. The same Lemma \ref{randvar} brings us a positive constant upper bound for $\e[F_1]$.
  Finally the inequation we get from equation (\ref{fact}) is the following
  \begin{equation*}
\e[C^\varepsilon\psi^\varepsilon\mathbf{1}_{\tau^\varepsilon=\tau_k^\varepsilon}]\leq G\{c_k(T_k-t_k)^{1-h}+J^{-\eta}\}^{\eta}
  \end{equation*}  
 where $G$ is a positive constant. From this we deduce
 \begin{equation*}
\e[C^\varepsilon\psi^\varepsilon\mathbf{1}_{T>\tau^\varepsilon}]=\sum_k\e[C^\varepsilon\psi^\varepsilon\mathbf{1}_{\tau^\varepsilon=\tau_k^\varepsilon}]\leq G \max_k\left\{(c_k(T_k-t_k)^{1-h}+J^{-\eta})^{\eta}\right\}
 \end{equation*} 
 According to this last result and using the lower bound of $C^\varepsilon\e[\psi^\varepsilon]$ given by Proposition \ref{psibnd} we finally have
 \begin{equation*}
\frac{\e[C^\varepsilon\psi^\varepsilon\mathbf{1}_{T=\tau^\varepsilon}]}{\e[C^\varepsilon\psi^\varepsilon]}\geq 1-G \max_k\left\{(c_k(T_k-t_k)^{1-h}+J^{-\eta})^{\eta}\right\}
 \end{equation*} where $G$ is positive constant. So using inequality (\ref{ident}) we obtain
 \begin{equation}
\label{mino}\frac{\e[\varphi^\varepsilon\mathbf{1}_{T=\tau^\varepsilon}]}{\e[\varphi^\varepsilon]}\geq 1-G \max_k\left\{(c_k(T_k-t_k)^{1-h}+J^{-\eta})^{\eta}\right\}
 \end{equation}
 Moreover the family $(\varphi^\varepsilon\mathbf{1}_{T=\tau^\varepsilon})_{\varepsilon}$ is uniformly integrable. Indeed by definition of $\tau^\varepsilon$ we can get upper bounds depending on $J$ for the different factors in Expression (\ref{phie}) of $\varphi^\varepsilon$ or (\ref{phi}) of $\varphi$, for all $0\leq\varepsilon<1$
 \begin{multline*}
\varphi^\varepsilon=\prod_{k=1}^N \varepsilon
_k^{-\frac{m_k}{2}} \eta^\varepsilon_k(y^\varepsilon_{T_k-\varepsilon})\exp\{-\int_{T_k-\varepsilon_k}^{T_k-\varepsilon}
\frac{(L_ky^\varepsilon_t-v_k)^*A^k_tL_kb_t\ud t}{T_k-t}-\frac{(L_ky^\varepsilon_t-v_k)^*\ud A_t^k (L_ky^\varepsilon_t-v_k)}{2(T_k-t)}\\-\sum_{1\leq i,j\leq m_k}\frac{\ud \langle A_{i,j}^k,(L_ky^\varepsilon-v_k)_i(L_ky^\varepsilon-v_k)_j\rangle_t}{2(T_k-t)}-\frac{\|\beta_{T_k-\varepsilon_k}^k(y^\varepsilon_{T_k-\varepsilon_k})(L_ky^\varepsilon_{T_k-\varepsilon_k}-v_k)\|^2}{2 \varepsilon_k}\}
 \end{multline*} in fact $"\varphi^0=\varphi"$.
We recall that $b$ and $\sigma$ are bounded so is $\eta$. Then on $\{\tau^\varepsilon=T\}$
 \begin{equation*}
\left\|\frac{(L_ky^\varepsilon_t-v_k)^*A^k_tL_kb_t}{T_k-t}\right\|\leq \frac{ C }{\sqrt{T_k-t}}\sqrt{\log\left(\frac{J}{(T_k-t)^{\frac{m_k} 2}}\right)}
 \end{equation*} which is an integrable quantity in $T_k$, and $C$ is a positive constant depending on the choice of $b$ and $\sigma$. A same method gives an upper bound for the terms where quadratic variation appears.
 For the terms in $\ud A_t^k$, we decompose with respect to integrals with respect to $\ud t$ and $\ud w_t$
  \begin{equation*}
\frac{(L_ky^\varepsilon_t-v_k)^*\ud A_t^k (L_ky^\varepsilon_t-v_k)}{2(T_k-t)}=\frac{\|L_k^*y^\varepsilon_t-v_k\|^2}{T_k-t}r^k_t(y^\varepsilon_t)\ud w_t+\frac{\|L_ky^\varepsilon_t-v_k\|^2}{T_k-t}q_t^k(y^\varepsilon_t)\ud t
 \end{equation*} where $r^k$ and $q^k$ are bounded adapted functions.
 Then for fixed $J$, there exists a constant $K$ such that 
 \begin{equation*}\varphi^\varepsilon\mathbf{1}_{T=\tau^\varepsilon}\leq C\prod_k\exp\{\int_{T_k-\varepsilon_k}^{T_k} \frac{\|L_ky_t-v_k\|^2}{T_k-t}r^k_t\ud w_t-\frac 1 2\frac{\|L_ky_t-v_k\|^4}{(T_k-t)^2}\|r^k_t\|^2\ud t\}
\end{equation*}where $C$ is  a positive constant. Let us recall the following lemma (cf \cite{KS} p.198)
\begin{lem}[Novikov]
Let $(M_t)_{t\in\re}$ be a continuous local martingale, we set for all $t$
\begin{equation*}
Z_t=\exp\{M_t-\frac{1}{2}\langle M\rangle_t \}
\end{equation*}If
\begin{equation*}
\e[\exp\{\frac{1}{2}\langle M\rangle_t\}]<+\infty
\end{equation*}then we have 
\begin{equation*}
\e[Z_t]=1
\end{equation*}
\end{lem}Let us remark that for all
$p>0$
\begin{equation*}
\exp\{ p\sum_k\int_{T_{k}-\varepsilon_k}^{T_k-\varepsilon}\frac{\|L_ky^\varepsilon_t-v_k\|^4}{2|T_k-t|^2}\|r^k_t\|^2\ud t\}]\leq \exp\{p\sum_k\int_{T_{k}-\varepsilon_k}^{T_k-\varepsilon} \left[\log\left(\frac{J}{(T_k-t)^{\frac {m_k} 2}}\right)\right]^2\ud t\}
\leq C^p
\end{equation*}
where $C$ is a positive constant. Thus, we apply Novikov's lemma to get uniform integrability.
Then we take the $\liminf_{\varepsilon\to0}$ and use Lebesgue's theorem to obtain 
\begin{equation}
\frac{\e[\varphi\mathbf{1}_{T=\tau^\varepsilon}]}{\limsup_{\varepsilon\to 0}\e[\varphi^\varepsilon]}\geq 1-N\max_k \left\{(c_k(T_{k}-\varepsilon-t_k)^{1-h}+J^{-\eta})\right\}^{\frac{1}{\eta}}
\end{equation}
Now $\mathbf{1}_{T=\tau^\varepsilon}$ converges almost surely to 1 as $J$ tends to infinity. We are able to say after making the $t_k$ tend to $T_k$ that
\begin{equation}
\limsup_{\varepsilon\to 0}\e[\varphi^\varepsilon]\leq\e[\varphi]
\end{equation}
We finish the proof by Scheffé's Lemma (cf \cite{DM} p.36)
\begin{lem}[Scheffé]
Let $(f_n)_{n\in\n}$ be a sequence of positive functions converging to $f$, moreover we suppose that 
\begin{displaymath}
\lim_{n\to\infty}\e[f_n]=\e[f]<\infty
 \end{displaymath} then the sequence $(f_n)$ converges to $f$ in $\mathbb{L}^1$.
\end{lem}
 \end{proof}
 
 Finally we conclude thanks to Lemmas \ref{condexp} and \ref{unifcv}.
  \end{proof}

\subsection*{Case where $b$ is unbounded}

Suppose now that $b$ is locally Lipschitz with respect to $x$ and is locally bounded. Moreover the SDE (\ref{init}) admits a strong solution. We use a Girsanov theorem to reduce the problem to the case of a bounded drift.

We recall the Girsanov theorem for unbounded drifts introduced in \cite{DH}
\begin{thm}\label{girsdh}
Let $b$, $h$ and $\sigma$ be measurable functions from $\re^+\times \re^n$ to $\re^n$, $\re^d$ and $\re^{n\times d}$ locally Lipschitz with respect to $x$; consider the following SDE's
\begin{align*}
&\ud x_t=b_t(x_t)\ud t+\sigma_t(x_t)\ud w_t,\\
&\ud y_t=(b_t(y_t)+\sigma_t(y_t)h_t(y_t))\ud t+\sigma_t(y_t)\ud \tilde{w}_t,\\
&x_0=y_0
 \end{align*} 
on the finite interval $[0,T]$. We assume the existence of strong solution for each equation. We assume in addition that $h$ is bounded on compact sets. Then  the Girsanov formula holds: for any non negative Borel function $f$ defined on $C([0,T],\re^n)$, one has
\begin{align*}
&\e[f(y,\tilde{w}^h)]=\e[f(x,w)\exp\{\int_0^T h^*_t(x_t)\ud w_t-\frac 1 2 \int_0^T \|h_t(x_t)\|^2\ud t\}]\\
&\e[f(x,w)]=\e[f(y,\tilde{w}^h)\exp\{-\int_0^T h^*_t(y_t)\ud \tilde{w}_t-\frac 1 2 \int_0^T \|h_t(y_t)\|^2\ud t\}]
\end{align*}where $\tilde{w}^h=\tilde{w}_t+\int_0^th_s(y_s)\ud s$.
\end{thm}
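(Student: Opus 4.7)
The plan is to prove Theorem~\ref{girsdh} by localization, reducing to the bounded setting where classical Girsanov applies. Fix $R>0$ and introduce the stopping times $\tau_R = \inf\{t\in[0,T]: \|x_t\|\geq R\}\wedge T$ and $\sigma_R = \inf\{t\in[0,T]: \|y_t\|\geq R\}\wedge T$. Since $h$ is bounded on compact sets, the process $s\mapsto h_s(x_s)\mathbf{1}_{s\leq \tau_R}$ is bounded and adapted, so Novikov's criterion is trivially satisfied and
\[
Z_t^R = \exp\Bigl\{\int_0^{t\wedge \tau_R} h_s^*(x_s)\ud w_s - \tfrac{1}{2}\int_0^{t\wedge \tau_R}\|h_s(x_s)\|^2\ud s\Bigr\}
\]
is a genuine martingale. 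Under the probability $Z_T^R\cdot\p$, the process $\tilde w^R_t = w_t - \int_0^{t\wedge \tau_R} h_s(x_s)\ud s$ is Brownian and $x$, stopped at $\tau_R$, solves the same SDE as $y$ stopped at $\sigma_R$. Pathwise uniqueness under the local Lipschitz hypothesis then identifies these two laws on $\{\tau_R = T\}$ and $\{\sigma_R = T\}$ respectively.

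The main obstacle is upgrading $Z^R$ to a true martingale $Z$ on all of $[0,T]$, i.e.\ establishing $\e[Z_T]=1$ without a global Novikov bound. The standard trick is to exploit the hypothesis that the $y$-equation admits a strong (hence non-exploding) solution on $[0,T]$, so that $\p(\sigma_R = T)\to 1$ as $R\to\infty$. The localized Girsanov identity applied to $f\equiv \mathbf{1}$ gives $\e[Z_T^R\mathbf{1}_{\tau_R = T}] = \p(\sigma_R = T)$, which forces $\liminf_R \e[Z_T^R]\geq 1$; Fatou applied to the non-negative local martingale $Z$ yields the reverse inequality $\e[Z_T]\leq 1$, and the two together promote $Z$ to a true martingale with $\e[Z_T]=1$.

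With $Z$ identified as a true martingale, the first formula follows by taking a non-negative Borel $f$, applying the localized identity to $f(x^{\cdot\wedge\tau_R},w^{\cdot\wedge\tau_R})\mathbf{1}_{\tau_R=T}$, and letting $R\to\infty$: monotone convergence handles the right-hand side while $\p(\sigma_R = T)\to 1$ handles the left-hand side. The second formula is obtained by exchanging the roles of $x$ and $y$ and replacing $h$ by $-h$, the assumptions being symmetric between the two SDEs. The delicate point is entirely the martingale upgrade; everything else is a routine localization argument that works because local Lipschitz coefficients enforce pathwise uniqueness and local boundedness of $h$ makes the stopped exponential square-integrable.
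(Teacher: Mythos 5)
The paper itself gives no proof of Theorem~\ref{girsdh}: it is recalled verbatim from \cite{DH}, so there is no internal argument to compare yours against. Your localization proof is essentially the standard (and, as far as the citation goes, the original) route, and its outline is sound: stop at the exit time of a ball so that Novikov applies trivially, identify the laws of the stopped processes, and use the non-explosion of $y$ on $[0,T]$ to upgrade the exponential local martingale to a true martingale, after which the general identity follows by monotone convergence and the second formula by symmetry (the roles exchange with drift $b+\sigma h$ and perturbation $-h$, which satisfy the same hypotheses).

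Two steps deserve tightening. First, the inequality you extract, $\liminf_R \e[Z_T^R]\geq 1$, is vacuous: $\e[Z_T^R]=1$ for every $R$ because $Z^R$ is a true martingale. What the localized identity actually gives, and what you need, is
\begin{equation*}
\e[Z_T]\;\geq\;\e[Z_T\mathbf{1}_{\tau_R=T}]\;=\;\e[Z_T^R\mathbf{1}_{\tau_R=T}]\;=\;\p(\sigma_R=T)\;\longrightarrow\;1,
\end{equation*}
using that $Z_T=Z_T^R$ on $\{\tau_R=T\}$; combined with the supermartingale bound $\e[Z_T]\leq 1$ this yields $\e[Z_T]=1$. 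Second, ``pathwise uniqueness identifies these two laws'' is a shortcut: you are comparing the law of $(x_{\cdot\wedge\tau_R},\tilde w^R)$ under $Z_T^R\cdot\p$ with that of $(y_{\cdot\wedge\sigma_R},\tilde w)$ under $\p$, i.e. weak solutions on different probability spaces, so what you invoke is uniqueness in law jointly with the driving Brownian motion, which follows from pathwise uniqueness via Yamada--Watanabe. This joint identification is also what justifies replacing $f(x,w)$ by $f(y,\tilde w^h)$ in the limit, since $w=\tilde w^R+\int_0^{\cdot}h_s(x_s)\,\ud s$ on $\{\tau_R=T\}$ is the same measurable functional of the pair as $\tilde w^h$ is of $(y,\tilde w)$. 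With these two repairs your argument is complete and matches the level of generality claimed in the statement.
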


Thanks to both Theorems \ref{th} and \ref{girsdh} we obtain
\begin{thm}
Suppose $\sigma$ and $a^{-1}$ to be  $C_b^{1,2}$-functions. Assume that $b$ is a locally Lipschitz with respect to $x$ and locally bounded function.
Let $y$ be the solution of
\begin{equation*}
\ud y_t=\hat{b}_t(y_t)\ud t+\sigma_t(y_t)\ud \tilde{w}_t -\sum_{k=1}^N \sigma_t(y_t)\beta_t^k(y_t)\frac {L_ky_t-v_k}{T_k-t}\mathbf{1}_{(T_k-\varepsilon_k,T_k)}(t)\ud t
\end{equation*}
where $\hat{b}$ satisfies the assumptions of Theorem \ref{th}.

 Then for any bounded continuous function $f$
\begin{multline*}
\e[f(x)|(L_kx_{T_k}=v_k)_{1\leq k\leq N}]
\\=C\e\Big [ f(y)\prod_{k=1}^N \eta_k(y_{T_k})\exp \big\{-\frac{\|\beta^k_{T_k-\varepsilon_k}(L_ky_{T_k-\varepsilon_k}-v_k)\|^2}{2 \varepsilon_k}+\int_{T_k-\varepsilon_k}^{T_k}-\frac{(L_ky_s-v_k)^*L_k\hat{b}_s(y_s)\ud s}{T_k-s}\\
-\frac{(L_ky_s-v_k)^*\ud\left(A_t^k(y_t)\right)(L_ky_s-v_k)}{2(T_k-s)}-\sum_{1\leq i,j\leq m_k} \frac{\ud\big\langle A^k(y_.)_{i,j},(L_ky_.-v_k)_i(L_ky_.-v_k)_j\big\rangle_s}{2(T_k-s)}\\
+\int_0^T \check{b}^*_t(y_t)a_t(y_t)^{-1}\ud y_t-\frac 1 2  \|\sigma_t(y_t)^{-1}\check{b}_t(y_t) \|^2\ud t\big\}\Big ]
\end{multline*}where $C$ is a positive constant and $\check{b}=b-\hat{b}$.
\end{thm}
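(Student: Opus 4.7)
The plan is to reduce to the bounded-drift Theorem~\ref{th} through the Girsanov change of drift provided by Theorem~\ref{girsdh}, using the bounded approximation $\hat b$ that appears in the statement. Set $\check b=b-\hat b$; then $\check b$ is locally Lipschitz and locally bounded, so $h=\sigma^{-1}\check b$ satisfies the hypotheses of Theorem~\ref{girsdh}. Let $\hat x$ denote the solution of (\ref{init}) with $b$ replaced by $\hat b$: it falls under the scope of Theorem~\ref{th}, while the laws of $x$ and $\hat x$ are related by an explicit Radon--Nikodym derivative.

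First, I apply Theorem~\ref{girsdh} to the bounded continuous functional $F(\omega)=f(\omega)\,g(L_1\omega_{T_1},\dots,L_N\omega_{T_N})$ with $g$ an arbitrary bounded continuous test, and rewrite the stochastic integral pathwise via $\ud w_s=\sigma^{-1}(\hat x_s)(\ud\hat x_s-\hat b_s(\hat x_s)\ud s)$. This produces $\e[F(x)]=\e[F(\hat x)D(\hat x)]$ for the path functional
\[
D(\omega)=\exp\Bigl\{\int_0^T\check b^*_t(\omega_t)a_t^{-1}(\omega_t)\ud\omega_t-\int_0^T\check b_t^*a_t^{-1}\hat b_t(\omega_t)\ud t-\tfrac12\int_0^T\|\sigma_t^{-1}(\omega_t)\check b_t(\omega_t)\|^2\ud t\Bigr\}.
\]
Disintegrating over $g$, using the joint continuity in $v$ of both sides (granted by the Aronson bounds already exploited in Proposition~\ref{psibnd}), yields
\[
\e[f(x)\mid L_kx_{T_k}=v_k,\,\forall k]=\frac{\e[f(\hat x)D(\hat x)\mid L_k\hat x_{T_k}=v_k,\,\forall k]}{\e[D(\hat x)\mid L_k\hat x_{T_k}=v_k,\,\forall k]}.
\]

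Next, applying Theorem~\ref{th} with drift $\hat b$ and the bridge $y$ of the statement, separately to the functionals $f\cdot D$ and $D$, the common normalising constant cancels on division and I obtain
\[
\e[f(x)\mid L_kx_{T_k}=v_k,\,\forall k]=C\,\e\bigl[f(y)\,R_{\hat b}(y)\,D(y)\bigr],
\]
where $R_{\hat b}$ denotes the weighting functional of Theorem~\ref{th} written with $\hat b$ in place of $b$. Regrouping the $\hat b$-contributions inside $R_{\hat b}(y)$ with the compensating drift term inside $D(y)$ rearranges the product $R_{\hat b}(y)\,D(y)$ into precisely the exponential displayed in the statement.

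The main technical obstacle is the integrability required for all of the above, since $\check b$ is only locally bounded and Novikov's criterion is not automatic. I would handle this by localising with the stopping times $\tau_n=\inf\{t:\|\hat x_t\|\geq n\}$ (and the analogous ones for $y$), running the chain of identities on $\{\tau_n\geq T\}$ and passing to the limit using the non-explosion granted by the strong existence assumed for both SDEs. The integrability of $D(y)R_{\hat b}(y)$ along the bridge near the observation times requires particular care: it follows by combining Proposition~\ref{psibnd} with Novikov-type bounds on the Girsanov weight obtained from the local boundedness of $\check b$ along continuous sample paths, while the passage from approximation by bounded continuous functionals to the full identity invokes the absolute convergence statements of Lemmas~\ref{cvbridge} and~\ref{convint}.
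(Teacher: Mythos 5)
Your proposal follows essentially the same route as the paper: introduce the auxiliary diffusion $\hat x$ with bounded drift $\hat b$, apply the unbounded-drift Girsanov result (Theorem~\ref{girsdh}) to functionals of the form $f\cdot g(L_1\,\cdot_{T_1},\dots,L_N\,\cdot_{T_N})$, disintegrate over the observation values, and then invoke Theorem~\ref{th} with drift $\hat b$. Your write-up is if anything more explicit than the paper's (the ratio normalisation, the localisation argument, and the cross term $-\int_0^T\check b_t^*a_t^{-1}\hat b_t\,\ud t$ in the Girsanov exponent, which the paper's displayed density leaves implicit), but it is the same argument.
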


\begin{proof}
Let $\hat{x}$ be the solution of 
\begin{equation*}
\ud \hat{x}_t=\hat{b}_t(\hat{x}_t)\ud t+\sigma_t(\hat{x}_t)\ud w_t,\quad \hat{x}_0=u
\end{equation*}
Then from Theorem \ref{girsdh}, for any bounded continuous function $f$ and $g$
\begin{align*}
&\e[f(x)g(L_1x_{T_1},\dots,L_Nx_{T_N})]=\e[f(\hat{x}_t)g(L_1\hat{x}_{T_1},\dots,L_N\hat{x}_{T_N})e^{\int_0^T \check{b}^*_t(\hat{x}_t)a_t(\hat{x}_t)^{-1}\ud \hat{x}_t-\frac 1 2  \|\sigma_t(\hat{x}_t)^{-1}\check{b}_t(y_t) \|^2\ud t}]\\
&=\int \e[f(\hat{x}_t)e^{\int_0^T \check{b}^*_t(\hat{x}_t)a_t(\hat{x}_t)^{-1}\ud \hat{x}_t-\frac 1 2  \|\sigma_t(\hat{x}_t)^{-1}\check{b}_t(y_t) \|^2\ud t}|(L_k\hat{x}_{t_k}=v_k)_{1\leq k\leq N}]g(v_1,\dots,v_N)\prod_k \ud v_k
\end{align*}
It remains to apply Theorem \ref{th}.
\end{proof}

\section*{Appendix}

\begin{lem}\label{monobridconv}
Let us consider Equation~(\ref{genpb}) with random initial condition $u$ on $[0,T]$ with $N=1$ which means only one observation time in $T$. 
\begin{equation*}
\ud y_t=b_t(y_t)\ud t+\sigma_t(y_t)\tilde{w}_t-P_t(y_t)\frac{y_t-u_1}{T-t}\mathbf{1}_{(T-\varepsilon_1,T)}\ud t,\quad y_0=u
\end{equation*}
Then this equation admits a unique solution on $[0,T)$. Moreover  $\|L(y_t-u_1)\|^2\leq C(\omega)(T-t)\log\log[(T-t)^{-1}+e]$ a.s., where $C$ is a positive random variable.
\end{lem}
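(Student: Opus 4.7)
The plan is to exploit the projection hypothesis $L P_t = L$ to reduce the analysis of the singular drift to a linear SDE of bridge type in the observed coordinates $z_t := L(y_t - u_1)$.

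For existence and uniqueness, I would split the interval. On $[0, T-\varepsilon_1]$ the drift and diffusion are bounded Lipschitz (since $b,\sigma,P$ are $C^{1,2}_b$), so classical Itô theory yields a unique strong solution starting from the random initial condition $u$. On $[T-\varepsilon_1, T)$, for any fixed $\delta>0$ the coefficients are Lipschitz on $[T-\varepsilon_1, T-\delta]$ with a Lipschitz constant of order $1/\delta$, so again classical theory gives a unique solution there. Patching over $\delta\to 0$ and applying a Gronwall estimate on $\e[\|y_t\|^2]$ to preclude explosion on compact subintervals of $[0,T)$, I obtain a unique strong solution on $[0, T)$.

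For the LIL bound, applying $L$ to the SDE and using $L P_t(y_t) = L$, the "observable" process $z_t$ satisfies
\begin{equation*}
\ud z_t = L b_t(y_t)\,\ud t + L\sigma_t(y_t)\,\ud \tilde{w}_t - \frac{z_t}{T-t}\,\ud t,\qquad t\in[T-\varepsilon_1, T).
\end{equation*}
This is linear in $z$, so Itô's formula applied to $z_t/(T-t)$ gives the explicit representation
\begin{equation*}
\frac{z_t}{T-t} = \frac{z_{T-\varepsilon_1}}{\varepsilon_1} + \int_{T-\varepsilon_1}^{t}\frac{L b_s(y_s)}{T-s}\,\ud s + M_t,\qquad M_t:=\int_{T-\varepsilon_1}^{t}\frac{L\sigma_s(y_s)}{T-s}\,\ud \tilde{w}_s.
\end{equation*}
Since $b$ is bounded, the Lebesgue integral is $O(|\log(T-t)|)$, which is negligible. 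The bracket of $M$ equals $\int_{T-\varepsilon_1}^{t}(T-s)^{-2} L a_s(y_s) L^*\,\ud s$, and by uniform ellipticity of $a$ this bracket grows like $C/(T-t)$ as $t\to T$. A coordinatewise Dambis--Dubins--Schwarz time change combined with the classical law of iterated logarithm for Brownian motion yields, almost surely,
\begin{equation*}
\|M_t\| \leq C(\omega)\sqrt{\frac{1}{T-t}\,\log\log\frac{1}{T-t}}
\end{equation*}
for $t$ sufficiently close to $T$. Multiplying by $(T-t)$ then delivers the announced estimate $\|z_t\|^2 \leq C(\omega)(T-t)\log\log[(T-t)^{-1}+e]$.

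The main technical obstacle I expect is the rigorous vector-valued LIL application (choosing a sufficiently large bound so it holds uniformly for all $t$ in a whole neighborhood of $T$, not just at a sequence), but this is handled by the standard Dambis--Dubins--Schwarz argument applied to each coordinate and taking a union bound. A secondary nuisance is that $P$ and $\sigma$ depend on $y$, but since these coefficients are bounded, neither the Lipschitz argument for existence nor the bracket estimate for $M$ is affected.
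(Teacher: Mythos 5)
Your proposal is correct and follows essentially the same route as the paper: uniqueness by patching the locally Lipschitz regimes, then using $LP_t=L$ to cancel the singular drift so that $L(y_t-u_1)/(T-t)$ has differential $(T-t)^{-1}L[b_t\,\ud t+\sigma_t\,\ud\tilde w_t]$, and finally a coordinatewise Dambis--Dubins--Schwarz time change with quadratic variation bounded by $c/(T-t)$ plus the law of the iterated logarithm. The only difference is cosmetic (you spell out the negligible $O(|\log(T-t)|)$ drift contribution and the final squaring, which the paper leaves implicit).
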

\begin{proof}
We recall that parameters $b$ and $\sigma$ are locally Lipschitz functions. So that the equation admits a unique solution on both intervals $[0,T-\varepsilon_1]$ and $(T-\varepsilon_1,T)$ and so on $[0,T)$.
Moreover thanks to Itô's formula, on $(T-\varepsilon_1,T)$
\begin{equation*}
\ud \frac{L(y_t-u_1)}{T-t}=(T-t)^{-1}L[b_t\ud t+\sigma_t\ud \tilde{w}_t-P_t\frac{y_t-u_1}{T-t}\ud t]+L\frac{y_t-u_1}{(T-t)^2}\ud t
\end{equation*}then using (\ref{propproj}), we have $LP_t=L$ so that 
\begin{equation*}
\ud \frac{L(y_t-u_1)}{T-t}=(T-t)^{-1}L[b_t\ud t+\sigma_t\ud \tilde{w}_t]
\end{equation*}
For all $1\leq i\leq n$ the process $\{(\int_0^t(T-s)^{-1}\sigma_s(y_s)\ud \tilde{w}_s)_i,t\geq 0\}$ is a continuous local martingale whose quadratic variation $\tau_t=\int_0^t\sum_{j=1}^n(T-s)^{-2}\sigma_s(y_s)_{i,j}\ud s$ satisfies $\lim_{t\to T}\tau_t=+\infty$ and $\tau_t\leq \frac{c}{T-t}$ where $c$ is a positive constant. Hence we just have to apply the  Dambis-Dubins-Schwarz theorem that gives us the existence of a Brownian motion $B^i$ such that
\begin{equation*}
\left(\int_0^t(T-s)^{-1}\sigma_s(y_s)\ud\tilde{w}_s\right)_i=B^i(\tau_t)
\end{equation*}
The law of iterated logarithm allows us to conclude.
\end{proof}

\begin{lem}\label{monoctrls}
Let us consider Equation~(\ref{genpb}) with random initial condition $u$ on $[0,T]$ with $N=1$ which means only one observation time in $T$ 
\begin{equation*}
\ud y_t=b_t(y_t)\ud t-P_t(y_t)\frac{y_t-u_1}{T-t}\ud t,\quad y_0=u
\end{equation*}
Then for all $s<t<T$, 
\begin{equation}\label{monoctrl1}
\frac{\e[\|L(y_t-u_1)\|^2]}{T-t}\leq c(1+\sqrt{T-t}\,\e[\|L(u-u_1]\|^2])
\end{equation}
and
\begin{equation}\label{monoctrl2}
\e[\|y_s-y_t\|^2]\leq C(t-s)(1+\sqrt{T-s}\,\e[\|L(u-u_1)\|^2])
\end{equation}where $c$ and $C$ are positive constants depending on $T$, $\varepsilon_1$, bounds for $b$ and $\sigma$.
\end{lem}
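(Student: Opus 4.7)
The plan is to establish \eqref{monoctrl1} first, exploiting the algebraic simplification that occurs when one applies $L$ to the bridge SDE, and then bootstrap to \eqref{monoctrl2} by bounding the three pieces of the integrated SDE for $y_t-y_s$ with the aid of \eqref{monoctrl1}.

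For \eqref{monoctrl1}, I would apply $L$ on both sides of the SDE (keeping the implicit diffusion term $\sigma_t(y_t)\ud\tilde w_t$ inherited from \eqref{genpb}) and invoke $LP_t(y_t)=L$ from \eqref{propproj} to get a closed linear bridge-type equation for $Z_t:=L(y_t-u_1)$:
\begin{equation*}
\ud Z_t = Lb_t(y_t)\ud t + L\sigma_t(y_t)\ud \tilde w_t -\frac{Z_t}{T-t}\ud t.
\end{equation*}
The mean-reverting structure suggests the change of variable $M_t:=Z_t/(T-t)$, for which Itô's formula yields $\ud M_t=(T-t)^{-1}[Lb_t(y_t)\ud t+L\sigma_t(y_t)\ud\tilde w_t]$; integrating from $0$ produces the explicit representation
\begin{equation*}
Z_t=(T-t)\Bigl[\tfrac{L(u-u_1)}{T}+\int_0^t\tfrac{Lb_s(y_s)}{T-s}\ud s+\int_0^t\tfrac{L\sigma_s(y_s)}{T-s}\ud\tilde w_s\Bigr].
\end{equation*}
Taking squared norms, applying Itô's isometry, and using boundedness of $b$, $\sigma$ and orthonormality of the rows of $L$ yield
\begin{equation*}
\e[\|Z_t\|^2]\leq C(T-t)^2\Bigl(\tfrac{\e[\|L(u-u_1)\|^2]}{T^2}+\log^2\tfrac{T}{T-t}+\tfrac{1}{T-t}\Bigr),
\end{equation*}
after which dividing by $T-t$ and using that $(T-t)\log^2(T/(T-t))$ remains bounded on $[0,T]$ while $(T-t)/T^2\leq \sqrt{T-t}/T^{3/2}$ gives \eqref{monoctrl1}.

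For \eqref{monoctrl2}, the starting point is
\begin{equation*}
y_t-y_s=\int_s^tb_r(y_r)\ud r+\int_s^t\sigma_r(y_r)\ud\tilde w_r-\int_s^tP_r(y_r)\tfrac{y_r-u_1}{T-r}\ud r.
\end{equation*}
The first piece is of order $(t-s)^2$ via $\|b\|_\infty$, and the stochastic integral is of order $(t-s)$ via Itô's isometry and $\|\sigma\|_\infty$. The correction integral is handled using the explicit form $P_r(z)=a_r(z)L^*(La_r(z)L^*)^{-1}L$: together with the ellipticity of $a$ and orthonormality of the rows of $L$, this gives the uniform pointwise bound $\|P_r(z)z'\|\leq C\|Lz'\|$. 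Cauchy--Schwarz in the Lebesgue integral combined with this bound and \eqref{monoctrl1} leads to
\begin{equation*}
\e\Bigl[\Bigl\|\int_s^tP_r(y_r)\tfrac{y_r-u_1}{T-r}\ud r\Bigr\|^2\Bigr]\leq C(t-s)\int_s^t\frac{1+\sqrt{T-r}\,\e[\|L(u-u_1)\|^2]}{T-r}\ud r,
\end{equation*}
and the resulting logarithmic prefactor in $(T-s)/(T-t)$ is absorbed into the constant $C$, which is allowed to depend on $T$ and $\varepsilon_1$.

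The main obstacle is precisely this correction term: its integrand has $L^2$-size of order $(T-r)^{-1/2}$, so a crude bound produces a $\log$-type singularity as $t\to T$. Taming it requires feeding the sharp decay $\e[\|L(y_r-u_1)\|^2]=O(T-r)$ from \eqref{monoctrl1} back into the estimate and exploiting the dependence of $C$ on $\varepsilon_1$ to control what remains of the logarithmic factor. Every other piece reduces to standard SDE estimates on bounded coefficients.
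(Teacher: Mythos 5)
Your proof of \eqref{monoctrl1} is correct, and it takes a different (more explicit) route than the paper: you solve the linear equation for $Z_t=L(y_t-u_1)$ by variation of constants and then use Itô's isometry, whereas the paper applies Itô's formula to $\|L(y_t-u_1)\|^2/(T-t)$ and runs an ODE comparison on $E_t=\e\big[\|L(y_t-u_1)\|^2/(T-t)\big]$. Both arguments give the stated bound; the only caveat is that in \eqref{genpb} the correction term acts only on $(T-\varepsilon_1,T)$, so your representation should be started at $T-\varepsilon_1$ and concatenated with a standard estimate on $[0,T-\varepsilon_1]$ — harmless, since the constants are allowed to depend on $\varepsilon_1$.

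For \eqref{monoctrl2}, however, there is a genuine gap at the correction term. Plain Cauchy--Schwarz gives $\e\big[\|\int_s^t P_r\frac{y_r-u_1}{T-r}\ud r\|^2\big]\le (t-s)\int_s^t \e[\|P_r(y_r-u_1)\|^2](T-r)^{-2}\ud r$, and feeding in \eqref{monoctrl1} the remaining integral contains $\int_s^t\frac{\ud r}{T-r}=\log\frac{T-s}{T-t}$. This logarithm cannot be ``absorbed into the constant'': it diverges as $t\uparrow T$ for fixed $s$ and has nothing to do with $\varepsilon_1$, so your estimate only yields $C(t-s)\big(1+\log\frac{T-s}{T-t}+\sqrt{T-s}\,\e[\|L(u-u_1)\|^2]\big)$, strictly weaker than \eqref{monoctrl2}. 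The missing idea is exactly the paper's Proposition~\ref{mink} (Minkowski's integral inequality): bounding $\e\big[(\int_s^t f_r\ud r)^2\big]^{1/2}$ by $\int_s^t\e[f_r^2]^{1/2}\ud r$ integrates the singularity $(T-r)^{-1/2}$ \emph{before} squaring, and $\int_s^t(T-r)^{-1/2}\ud r\le 2\sqrt{t-s}$ then produces the correct factor $(t-s)$ with no logarithm. (A weighted Cauchy--Schwarz, inserting $(T-r)^{\mp1/4}$, would also do; the unweighted one does not.) The rest of your argument for \eqref{monoctrl2} — drift term, stochastic integral, and the pointwise bound $\|P_r(z)z'\|\le C\|Lz'\|$ — is fine; note only that in this appendix lemma $P$ is a general projection satisfying \eqref{propproj}, but the same pointwise bound follows from $\ker P_t=\ker L$ together with boundedness of $P$, so invoking the specific form \eqref{goodproj} is not needed.
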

\begin{proof}
Thanks to Identity (\ref{propproj}), on $(T-\varepsilon_1,T)$
\begin{equation*}
\ud L(y_t-u_1)=L[b_t\ud t+\sigma_t\ud \tilde{w}_t]-L\frac{y_t-u_1}{T-t}\ud t
\end{equation*}
Thus
\begin{equation*}
\ud \big(\|L(y_t-u_1)\|^2\big)=2(y_t-u_1)^*L^*L[b_t\ud t+\sigma_t\ud \tilde{w}_t]-2\frac{\|L(y_t-u_1)\|^2}{T-t}\ud t+\tr(La_tL^*)\ud t
\end{equation*}where the function $Tr$ gives the sum of all diagonal terms. Finally
\begin{equation*}
\ud\left(\frac{\|L(y_t-u_1)\|^2}{T-t}\right)=2\frac{(y_t-u_1)^*}{T-t}L^*L[b_t\ud t+\sigma_t \ud\tilde{w}_t]+\frac{\tr(La_tL^*)}{T-t}\ud t-\frac{\|L(y_t-u_1)\|^2}{(T-t)^2}\ud t
\end{equation*}
Setting $E_t=\e\left[\frac{\|L(y_t-u_1)\|^2}{T-t}\right]$, since $b$ and 
$\sigma$ are bounded, we get
\begin{equation}\label{ode}
E_t'\leq C_1\left(\frac{\sqrt{E_t}+1}{T-t}\right)-\frac{E_t}{T-t}
\end{equation}where $C_1$ is a positive constant depending on $\|b\|_\infty$ and $\|\sigma\|_\infty$. 
\begin{equation}\label{inqdf}
E_t'\leq (T-t)^{-1}\left[C_1\left(\frac{E_t}{2C_1}+\frac {C_1}{ 2} +1\right)-E_t\right]=(T-t)^{-1}(C-\frac{E_t}{2})
 \end{equation} where $C=C_1+\frac{C_1^2}{2}$. 
 Thus
\begin{equation*}
\left(\frac{E_t-2C}{\sqrt{T-t}}\right)'=\frac{E_t'}{\sqrt{T-t}}+\frac{E_t-2C}{2(T-t)^{\frac 3 2}}\leq 0
\end{equation*}thanks to (\ref{inqdf}).
Hence
\begin{equation*}
\frac{E_t-2C}{\sqrt{T-t}}\leq \frac{E_{T-\varepsilon_1}-2C}{\sqrt{\varepsilon_1}}
\end{equation*}that can be written
\begin{equation*}
E_t\leq 2C +\sqrt{\frac{T-t}{\varepsilon_1}}(E_{T-\varepsilon_1}-2C)
\end{equation*} 
Similarly for $t<T-\varepsilon_1$, Inequality \ref{ode} becomes
\begin{equation*}
E_{t}\leq  C'(E_0+1)\exp\{\frac{C't}{\varepsilon_1}\}
\end{equation*}where $C'$ is a positive constant only depending on $T$ and bounds of $b$ and $\sigma$. 
 That gives us (\ref{monoctrl1}).
 
 By definition for $s,t\in(T-\varepsilon_1,T)$ we have
 \begin{equation*}
y_s-y_t=\int_s^t b_\tau\ud \tau+\sigma_\tau \ud \tilde{w}_\tau-P_\tau\frac{y_\tau-u_1}{T-\tau}\ud \tau
\end{equation*} 
 Since $b$ and $\sigma$ are bounded functions, using Minkovski's inequality
 \begin{equation}\label{mk}
\e[\|y_s-y_t\|^2]^\frac 1 2\leq \e[\|\int_s^tb_\tau\ud \tau\|^2]^\frac 1 2+\e[\|\int_s^t\sigma_\tau\ud \tilde{w}_\tau\|^2]^\frac 1 2+\e[\|\int_s^tP_\tau\frac{y_\tau-u_1}{T-\tau}\ud \tau\|^2]^\frac 1 2
\end{equation}Thanks to Doob's inequality (see \emph{e.g.} \cite{IW} p.170) we get
\begin{equation*}
\e[\|\int_s^tb_\tau\ud \tau\|^2]+\e[\|\int_s^t\sigma_\tau\ud \tilde{w}_\tau\|^2]\leq C_2(t-s)
\end{equation*} where $C_2=C_1^2$ is the square of the constant introduced above.
In order to treat the last term in (\ref{mk}), we beforehand give a property
\begin{prop}\label{mink}
Let $f$ be a real-valued process defined on a segment $[a,b]$, then
\begin{equation*}
\e[(\int_a^bf_s\ud s)^2]^\frac 1 2 \leq \int_a^b\e[f_s^2]^\frac 1 2\ud s
\end{equation*}
\end{prop}
\begin{proof}
Indeed
\begin{equation*}
\e[(\int_a^bf_s\ud s)^2]\leq \e[(\int_a^b|f_s|\ud s)^2]=2\e[\int_a^b\int_a^s|f_u|\ud u|f_s|\ud s]= 2\int_a^b\int_a^s\e[|f_uf_s|]\ud u\,\ud s
\end{equation*}so that
\begin{equation*}
\e[(\int_a^bf_s\ud s)^2]\leq 2\int_a^b\int_a^s\e[(f_u)^2]^\frac{1}{2}\e[(f_s)^2]^\frac{1}{2}\ud u\,\ud s=(\int_a^b \e[f_s^2]^\frac{1}{2}\ud s)^2
\end{equation*}
\end{proof}Thanks to Proposition~\ref{mink},  assumptions (\ref{propproj}) on matrix $P$  and result (\ref{monoctrl1})
\begin{align*}
&\e\left[\Big\|\int_s^tP_\tau\frac{y_\tau-u_1}{T-\tau}\ud \tau\Big\|^2\right]\leq \left(\int_s^t\e\left[\frac{\|P_\tau(y_\tau-u_1)\|^2}{(T-\tau)^2}\right]^\frac 1 2 \ud s\right)^2\\&\leq c(1+\sqrt{T-s}\,\e[\|L(u-u_1)\|^2])(\int_s^t\frac{\ud \tau}{\sqrt{T-\tau}})^2=4c(T-s)(1+\sqrt{T-s}\,\e[\|L(u-u_1)\|^2])
\end{align*}
Finally using $(a+b+c)^2\leq 3 (a^2+b^2+c^2)$, we have on $(T-\varepsilon_1,T)$
\begin{equation*}
\e[\|y_s-y_t\|^2]\leq (C_2\wedge 4c)(T-s)(2+\sqrt{T-s}\,\e[\|L(u-u_1)\|^2])
\end{equation*}Using Doob's inequality for $s,t\in[0,T-\varepsilon_1]$ with $s<t$ 
\begin{equation*}
\e[\|y_{s}-y_t\|^2]\leq C_2(t-s)
\end{equation*}this gives the second result (\ref{monoctrl2}).
\end{proof}

\begin{lem}\label{dist}
Let $y$ and $z$ be two bridges, solutions of (\ref{genpb}) with $N=1$ and different initializations. Then, there exist two constants $C>0$ and $0<\alpha<1$ such that for all $t\in[0,T]$
\begin{equation}
\e[\|y_t-z_t\|^2]\leq C\e[\|y_0-z_0\|^2]^\alpha
\end{equation}
\end{lem}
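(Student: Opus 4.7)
The plan is to split the interval at some $\delta\in(0,\varepsilon_1)$ to be chosen later, control $[0,T-\delta]$ by a singular Gronwall argument and $(T-\delta,T]$ by the moment bounds of Lemma~\ref{monoctrls}, then optimize $\delta$ as a function of $\e[\|y_0-z_0\|^2]$.

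On $[0,T-\delta]$ the drift of (\ref{genpb}) is Lipschitz in $y$ with constant at most $K_0$ on $[0,T-\varepsilon_1]$ and at most $K_0+K_1/(T-t)$ on $(T-\varepsilon_1,T-\delta]$. Indeed, expanding
\[
P_t(y)\tfrac{y-u_1}{T-t}-P_t(z)\tfrac{z-u_1}{T-t}=\tfrac{P_t(y)(y-z)}{T-t}+\tfrac{(P_t(y)-P_t(z))(z-u_1)}{T-t},
\]
the first summand is $O(\|y-z\|/(T-t))$ (boundedness of $P$) while the second is $O(\|y-z\|\,\|z-u_1\|/(T-t))$ since $P\in C_b^{1,2}$. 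Applying It\^o's formula to $\|y_t-z_t\|^2$, taking expectations, and absorbing the cross-factor $\|z_t-u_1\|/(T-t)$ via Cauchy--Schwarz together with the a priori $L^2$-bound of Lemma~\ref{monoctrls}~(\ref{monoctrl1}) produces a Gronwall-type inequality whose integrated form yields
\[
\e[\|y_{T-\delta}-z_{T-\delta}\|^2]\leq C\,\delta^{-\gamma}\,\e[\|y_0-z_0\|^2]
\]
for a constant $\gamma>0$ depending only on the bounds of $b$, $\sigma$, $P$ and their first derivatives.

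On $(T-\delta,T]$, I restart each bridge at $T-\delta$ by the Markov property and apply the short-time estimate (\ref{monoctrl2}) of Lemma~\ref{monoctrls}, noting that (\ref{monoctrl1}) provides a uniform bound on $\e[\|L(y_{T-\delta}-u_1)\|^2]$ (and similarly for $z$). This gives
\[
\e[\|y_t-y_{T-\delta}\|^2]+\e[\|z_t-z_{T-\delta}\|^2]\leq C\delta,\qquad t\in(T-\delta,T].
\]
Combining with the triangle inequality then yields
\[
\e[\|y_t-z_t\|^2]\leq C\,\delta^{-\gamma}\,\e[\|y_0-z_0\|^2]+C\delta,
\]
and the choice $\delta=\e[\|y_0-z_0\|^2]^{1/(\gamma+1)}\wedge\varepsilon_1$ delivers the Hölder bound with $\alpha=1/(\gamma+1)\in(0,1)$; if $\delta=\varepsilon_1$ is attained the conclusion is immediate from the uniform $L^2$-bounds on $y_t,z_t$.

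The main obstacle is the singular Gronwall step in the first regime: the cross-term $\|z-u_1\|/(T-t)$ is not uniformly bounded, and taming it requires exactly Lemma~\ref{monoctrls}~(\ref{monoctrl1}) together with a careful Young/Cauchy--Schwarz split so that the effective coefficient of $\e[\|y_t-z_t\|^2]$ is integrable on $[0,T-\delta]$ up to a $\log(\varepsilon_1/\delta)$ divergence that becomes the polynomial factor $\delta^{-\gamma}$ after exponentiation. Once this estimate is in hand, the rest is routine.
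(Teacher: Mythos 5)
Your overall strategy is the same as the paper's: Itô's formula applied to $\|y_t-z_t\|^2$, a cut of $[0,T]$ at a point $T-\delta$ (the paper's $T-h$), a singular Gronwall bound on the bulk producing a factor polynomial in $\delta^{-1}$, a small additive error on the last piece coming from Lemma~\ref{monoctrls}, and an optimization of the cutoff in terms of $\e[\|y_0-z_0\|^2]$ yielding the Hölder exponent. Your treatment of the final interval (restart at $T-\delta$, triangle inequality and (\ref{monoctrl2}), additive error $O(\delta)$) is a harmless variant of the paper's, which instead keeps the differential inequality $E_t'\le C(E_t+1)/\sqrt{T-t}$ there --- obtained by bounding $\|P_t(y_t)(y_t-u_1)-P_t(z_t)(z_t-u_1)\|$ additively by $\|P_t(y_t)(y_t-u_1)\|+\|P_t(z_t)(z_t-u_1)\|$ and invoking (\ref{monoctrl1}) --- and ends up with an additive $O(\sqrt h)$.

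The step that does not work as you describe it is the absorption of the cross term in the bulk regime. After Itô's formula and expectations, the term in question is $2\,\e[(y_t-z_t)^*(P_t(y_t)-P_t(z_t))(z_t-u_1)]/(T-t)\le C\,\e[\|y_t-z_t\|^2\|z_t-u_1\|]/(T-t)$, and Cauchy--Schwarz splits this into $\e[\|y_t-z_t\|^4]^{1/2}\e[\|z_t-u_1\|^2]^{1/2}/(T-t)$, not into $E_t\,\e[\|z_t-u_1\|^2]^{1/2}/(T-t)$: the a priori bound (\ref{monoctrl1}) controls second moments of $L(z_t-u_1)$ but gives no way to decouple this factor from the correlated factor $\|y_t-z_t\|^2$ inside the expectation, so the inequality does not close in $E_t$. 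To make your version rigorous you would need fourth-moment analogues of Lemma~\ref{monoctrls} (or an almost sure bound on $\|z_t-u_1\|$, which is not available --- Lemma~\ref{cvbridge} only provides a bound with a random constant). To be fair, the paper is terse at exactly this point: on $[0,T-h]$ it simply asserts $E_t'\le C E_t/(T-t)$ ``by using regularity of $b$ and $\sigma$'', i.e.\ it treats $z\mapsto P_t(z)(z-u_1)$ as uniformly Lipschitz, which hides the same difficulty; but it never claims that (\ref{monoctrl1}) plus Cauchy--Schwarz closes the Gronwall estimate, and near $T$ it switches to the additive bound precisely so that only $\e[\|y_t-z_t\|\cdot X]\le E_t^{1/2}\e[X^2]^{1/2}$ is needed, with no product of correlated quantities to decouple. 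So: same route as the paper, but the specific mechanism you propose for the key Gronwall step would fail as stated and must be replaced either by the paper's (blunter) Lipschitz assertion or by higher-moment estimates.
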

\begin{proof}
Using their definition, we have on $[T-\varepsilon_1,T]$
\begin{equation*}
\ud (y_t-z_t)=[b_t(y_t)-b_t(z_t)]\ud t + [\sigma_t(y_t)-\sigma_t(z_t)]\ud \tilde{w}_t-\frac {P_t(y_t)(y_t-u_1)-P_t(z_t)(z_t-u_1)}{T-t}\ud t
 \end{equation*} 
 Thus
 \begin{multline}\label{itosq}
\ud \|y_t-z_t\|^2=2(y_t-z_t)^*\big[[b_t(y_t)-b_t(z_t)]\ud t + [\sigma_t(y_t)-\sigma_t(z_t)]\ud \tilde{w}_t\\-\frac {P_t(y_t)(y_t-u_1)-P_t(z_t)(z_t-u_1)}{T-t}\ud t\big]+\sum_{i,j}(\sigma_t(y_t)-\sigma_t(z_t))^2_{i,j}\ud t
 \end{multline} 
 In a same way, on $[0,T-\varepsilon_1]$ we obtain 
 \begin{equation}\label{itosq2}
\ud \|y_t-z_t\|^2=2(y_t-z_t)^*\big[[b_t(y_t)-b_t(z_t)]\ud t + [\sigma_t(y_t)-\sigma_t(z_t)]\ud \tilde{w}_t\big]+\sum_{i,j}(\sigma_t(y_t)-\sigma_t(z_t))^2_{i,j}\ud t
 \end{equation} 
 We denote $E_t=\e[\|y_t-z_t\|^2]$. We decompose the interval $[0,T]$ into $[0,T-h]$ and $(T-h,T]$ with a parameter $h$ that will be chosen later. On $[0,T-h]$ with respect to both precedent Inequations (\ref{itosq}) and (\ref{itosq2}) we get by using regularity of $b$ and $\sigma$ 
 \begin{equation*}
E_t'\leq C_1 (E_t+\frac{E_t}{T-t})\leq \frac{C_2E_t}{T-t}
 \end{equation*} where $C_1$ and $C_2$ are positive constants depending on $T$, $b$ and $\sigma$. 
 We use Gronwall's lemma to obtain
 \begin{equation*}
E_t\leq E_0\left(\frac T h\right)^{C_2}
 \end{equation*} 
 For the other part $(T-h,T]$, we use (\ref{itosq}), (\ref{itosq2}) and (\ref{monoctrl1}) to get
 \begin{equation*}
E_t'\leq C_3(E_t+\sqrt{\frac{E_t}{T-t}})\leq C_4 \frac{E_t+1}{\sqrt{T-t}}
 \end{equation*} where $C_3$ and $C_4$ are positive constants depending on $T$, $b$ and $\sigma$. Then
 \begin{equation*}
\log(E_t+1)-\log(E_{T-h}+1)\leq C_4(\sqrt h -\sqrt{T-t})\leq C_4\sqrt h
 \end{equation*} 
Finally, on $[0,T]$
\begin{equation*}
E_t\leq E_0\left(\frac T h\right)^{C_5}(1+e^{C_5\sqrt h})+e^{C_5\sqrt h}-1\leq E_0\left(\frac T h\right)^{C_5}(1+K)+C_5K\sqrt h
 \end{equation*} where $K$ is  a positive constant depending on $T$ and $C_5=C_2 \vee C_4$. We then choose $h=\left(\frac{2E_0T^{C_5}(K+1)}{K}\right)^{\frac{1}{C_5+\frac 1 2}}$ which minimizes the last member above. Hence
 \begin{equation*}
E_t\leq CE_0^{\frac{1}{2C_5+1}}
 \end{equation*} where $C$ is a positive constant depending on $b$, $\sigma$, $P$ and $T$.
 \end{proof}

\begin{proof}[Proof of Lemma~\ref{convl2} ]
We now consider an interval of type $[T_{k-1},T_{k})$. We introduce a process $y^k$ solution on this interval for the Bridge Equation~(\ref{genpb}) initialized at time $T_{k-1}$ by the value $y_{T_{k-1}}^\varepsilon$. A picture to visualize what is going on is given by Figure \ref{illus} in page \pageref{illus}.
\begin{figure}
\caption{Illustration of the three different dynamics considered}
\label{illus}
\begin{center}
\includegraphics[height=12cm,width=12cm]{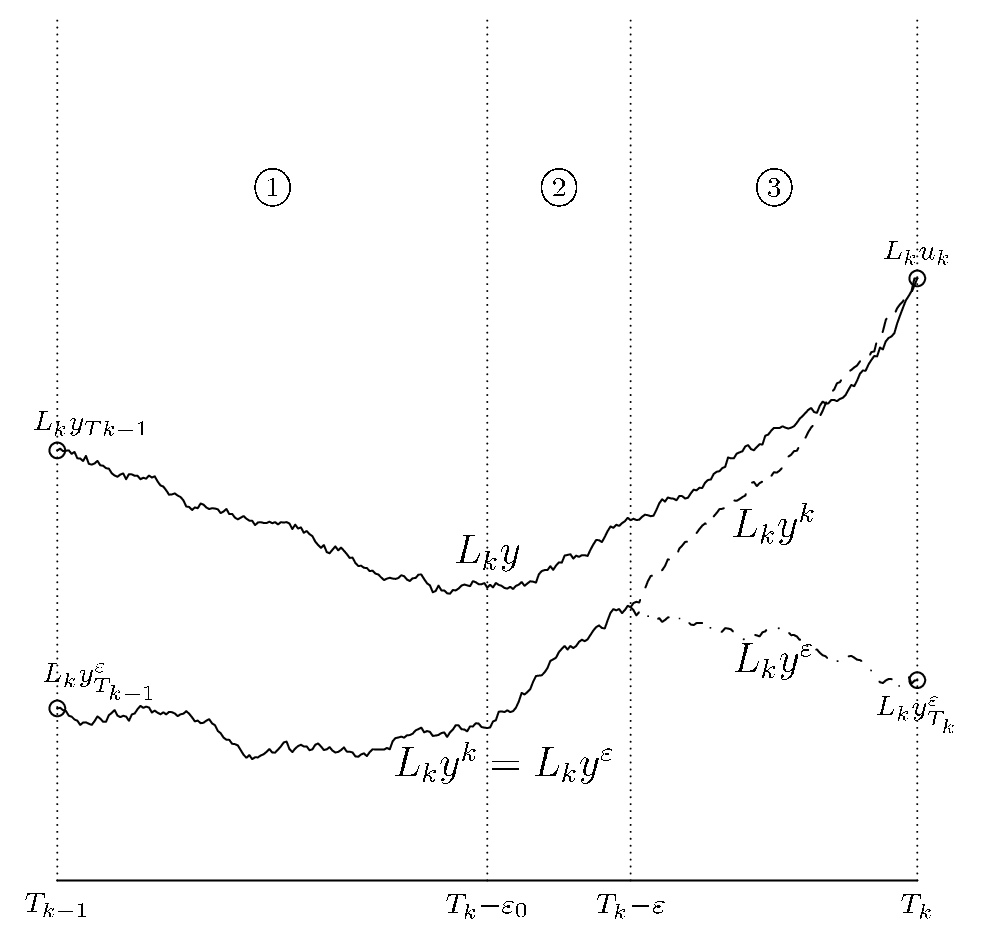}
\end{center}
Let us recall
\begin{align*}
\ud x&=b_t(x_t)\ud t+\sigma_t(x_t)\ud w_t &(\ref{init})\\
\ud y&=b_t(y_t)\ud t-P_t(y_t)\frac{y_t-u_{k}}{T_{k}-t}+\sigma_t(y_t)\ud w_t &(\ref{genpb})
\end{align*}

\begin{description}
\item[\textcircled{\scriptsize{1}}]First, the three processes follow the dynamics of the initial diffusion (\ref{init}) with a different initialization for $y$.
\item[\textcircled{\scriptsize{2}}]Now, the three processes follow the dynamics of the bridge (\ref{genpb}), that means that the correction term operates and forces these processes to get closer to the observation
\item[\textcircled{\scriptsize{3}}]At the end, only the processes $y$ and $y^k$ go on following the dynamics of the bridge (\ref{genpb}) and both tends to the obervation, while $y^\varepsilon$ follows the initial dynamics (\ref{init})
\end{description}
\end{figure}
We use this new process $y^k$ to write
\begin{equation}\label{trineq}
\e[\|y^\varepsilon_t-y_t\|^2]\leq 2\e[\|y^\varepsilon_t-y^k_t\|^2]+2\e[\|y^k_t-y_t\|^2]
\end{equation}
We will study both terms separately.

For the first one, on $[T_{k-1},T_k-\varepsilon)$ the term $\|y_t^\varepsilon-y_t^k\|$ is 0 a.s. and on $[T_k-\varepsilon,T_k)$, we can reduce the study to that of $\|x_t-y_t\|^2$ with a same initialization  $y_{T_k-\varepsilon}^\varepsilon$ at time $T_k-\varepsilon$.
\begin{equation*}
\e\big[\e[\|y^\varepsilon_t-y^k_t\|^2|y^\varepsilon_{T_{k}-\varepsilon}]\big]\leq 2\e\big[\e[\|y^k_{T_k-\varepsilon}-y^k_t\|^2|y^\varepsilon_{T_{k}-\varepsilon}]\big]+2\e\big[\e[\|y^\varepsilon_t-y^\varepsilon_{T_k-\varepsilon}\|^2|y^\varepsilon_{T_{k}-\varepsilon}]\big]
\end{equation*}
We then use Lemma (\ref{monoctrls}) given in the appendix  and classical technique (see \emph{e.g. \cite{IW}} p.170) to obtain upper bounds 
\begin{equation}\label{distxy}
\e\big[\e[\|y^\varepsilon_{t}-y^k_t\|^2|y^\varepsilon_{T_{k}-\varepsilon}]\leq c \varepsilon(1+\sqrt{\varepsilon}\,\e[\|L_k(y^\varepsilon_{T_{k-1}}-u_k)\|^2])
\end{equation}
 where $c$ is a positive constant.
Now in order to treat the remaining term we use Lemma~\ref{dist}
\begin{equation*}
\e\big[\e[\|y_t^k-y_t\|^2|y_{T_{k-1}},y_{T_{k-1}}^\varepsilon]\big]\leq \e[\|y^\varepsilon_{T_{k-1}}-y_{T_{k-1}}\|^2]^\alpha
\end{equation*}Finally, on $[T_{k-1},T_k)$
\begin{equation}\label{induction}
\e[\|y_t^\varepsilon-y_t\|^2]\leq c' \left[\varepsilon(1+\sqrt{\varepsilon}\,\e[\|L_k(y^\varepsilon_{T_{k-1}}-u_k)\|^2])+ \e[\|y^\varepsilon_{T_{k-1}}-y_{T_{k-1}}\|^2]^\alpha\right]
\end{equation}where $0<\alpha<1$ and $c'$ is a positive constant only depending on $T$, bounds $b$ and $\sigma$.
We show by induction that there exists some constant $C$ such that for all $1\leq k\leq N$
\begin{equation}\label{hyprec}
\e[ \|y_{T_k}^\varepsilon-y_{T_k} \|^2 ] \leq C^k \varepsilon
^{\alpha^{k-1}}
\end{equation}
The base case is given by Equation~(\ref{distxy}). Indeed on $[0,T_1]$ processes $y^1$ and $y$ are indistinguishable since they have a same initialization at time 0. Suppose now for some $k$ that Inequality (\ref{hyprec}) holds. We now use Equation~(\ref{induction}) to get
\begin{equation*}
\e[\|y_{T_{k+1}}^\varepsilon-y_{T_{k+1}}\|^2]\leq c' \left[\varepsilon(1+\sqrt{\varepsilon}\,\e[\|L_{k+1}(y^\varepsilon_{T_{k}}-u_{k+1})\|^2])+ \e[\|y^\varepsilon_{T_{k}}-y_{T_{k}}\|^2]^\alpha\right]
\end{equation*}Let us recall that $L_ky_{T_k}=L_ku_k$ hence
\begin{equation*}
\e[\|L_{k+1}(y^\varepsilon_{T_{k}}-u_{k+1})\|^2]\leq \e[\|L_{k+1}(y^\varepsilon_{T_{k}}-y_{T_k})\|^2]+\|L_{k+1}(u_k-u_{k+1})\|^2\leq c_k(1+\e[\|y^\varepsilon_{T_{k}}-y_{T_k}\|^2])
\end{equation*}where $c_k$ is a positive constant depending on $L_{k+1}$, $u_k$ and $u_{k+1}$. That gives us thanks to the induction hypothesis
\begin{equation*}
\e[\|y_{T_{k+1}}^\varepsilon-y_{T_{k+1}}\|^2]\leq c_k' \varepsilon(1+\sqrt{\varepsilon}\,\e[\|y^\varepsilon_{T_{k}}-y_{T_k}\|^2])+ \e[\|y^\varepsilon_{T_{k}}-y_{T_{k}}\|^2]^\alpha\leq C [\varepsilon(1+ \sqrt \varepsilon C^k \varepsilon ^{\alpha^{k-1}})+C^k \varepsilon^{\alpha^{k}}]
\end{equation*}where $C$ is a positive constant. This concludes the proof.
\end{proof}

\begin{lem}\label{condexp}
Let $(t_{k,q})_{\substack{1\leq k\leq N\\ 1\leq q\leq M_k}}$ be a sequence such that $t_{k,q}\in (T_{k-1},T_k)$ and for all  $k$, $(t_{k,q})_q$ is an increasing sequence.
Then for all bounded continuous function $g$
\begin{equation*}
\lim_{\varepsilon\to 0}\frac{\e[g(x_{t_{1,1}},\dots,x_{t_{N,M_N}})\psi^\varepsilon]}{\e[\psi^\varepsilon]}=\e[g(x_{t_{1,1}},\dots,x_{t_{N,M_N}})|(L_kx_{T_k}=v_k)_{1\leq k\leq N}]
\end{equation*}
\end{lem}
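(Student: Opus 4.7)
The plan is to exploit the Gaussian form of the weight $\psi^\varepsilon$ as a kernel in the variables $L_kx_{T_k-\varepsilon}-v_k$, and to verify that as $\varepsilon\to 0$ this kernel behaves as a Dirac mass at $L_kx_{T_k}=v_k$ for each $k$; averaging against $\psi^\varepsilon$ then produces the conditional expectation in the limit. For $\varepsilon$ small enough that $t_{k,M_k}<T_k-\varepsilon$ for every $k$, the Markov property of $x$ gives
\begin{equation*}
\e[g(x_{t_{1,1}},\ldots,x_{t_{N,M_N}})\psi^\varepsilon]=\int g(\xi)\,p^\varepsilon(\xi,\zeta_1,\ldots,\zeta_N)\prod_{k=1}^NK_k^\varepsilon(\zeta_k)\,\ud\xi\,\ud\zeta_1\cdots\ud\zeta_N,
\end{equation*}
where $p^\varepsilon$ is the joint density of $(x_{t_{k,q}})_{k,q}$ and $(x_{T_k-\varepsilon})_k$ (a product of transition densities), and
\begin{equation*}
K_k^\varepsilon(z):=\varepsilon^{-m_k/2}\eta_k^\varepsilon(z)\exp\Big\{-\frac{1}{2\varepsilon}(L_kz-v_k)^*A_{T_k-\varepsilon}^k(z)(L_kz-v_k)\Big\}.
\end{equation*}
The key point is that $K_k^\varepsilon(z)$ is exactly $(2\pi)^{m_k/2}$ times the density, at $L_kz\in\re^{m_k}$, of a Gaussian of mean $v_k$ and covariance $\varepsilon A_{T_k-\varepsilon}^k(z)^{-1}$, so it concentrates at $L_kz=v_k$ as $\varepsilon\to 0$.

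To turn this into an honest limit I would decompose $\zeta_k=L_k^*w_k+u_k$ with $w_k=L_k\zeta_k\in\re^{m_k}$ and $u_k\in\ker L_k$; orthonormality of the rows of $L_k$ makes this an isometric change of variables, so $\ud\zeta_k=\ud w_k\,\ud u_k$. The Aronson two-sided Gaussian estimates on each transition factor of $p^\varepsilon$, already invoked in the proof of Proposition~\ref{psibnd}, provide an upper bound on $p^\varepsilon$ by a Gaussian in $(\zeta_k)_k$ which is uniform in $\varepsilon\in(0,\varepsilon_0)$. Combined with boundedness of $\eta_k^\varepsilon$ and of $g$, and with the Gaussian shape of $K_k^\varepsilon$, this yields an integrable dominating function in $(\zeta_k)_k$ valid uniformly in $\varepsilon$. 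Dominated convergence together with the pointwise continuity $p^\varepsilon(\xi,\zeta)\to p^0(\xi,\zeta)$, where $p^0$ is the joint density at times $(t_{k,q})_{k,q},(T_k)_k$, and the concentration of $K_k^\varepsilon$ at $w_k=v_k$, gives
\begin{equation*}
\lim_{\varepsilon\to 0}\e[g(x_{t_{1,1}},\ldots,x_{t_{N,M_N}})\psi^\varepsilon]=(2\pi)^{\sum_k m_k/2}\int g(\xi)\,p^0\big(\xi,L_1^*v_1+u_1,\ldots,L_N^*v_N+u_N\big)\,\ud\xi\,\ud u_1\cdots\ud u_N,
\end{equation*}
where each $u_k$ is integrated over $\ker L_k$.

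Applying the same argument with $g\equiv 1$ gives the analogous limit for $\e[\psi^\varepsilon]$, namely $(2\pi)^{\sum_k m_k/2}$ times the density of $(L_1x_{T_1},\ldots,L_Nx_{T_N})$ at $(v_1,\ldots,v_N)$, which is finite and strictly positive by Aronson's lower bound (and reproduces the bound of Proposition~\ref{psibnd}). Dividing the two limits cancels the $(2\pi)^{\sum_k m_k/2}$ factor and leaves exactly the disintegration formula for $\e[g(x_{t_{1,1}},\ldots,x_{t_{N,M_N}})\mid(L_kx_{T_k}=v_k)_{1\le k\le N}]$, which is the claim. The main obstacle is the dominated-convergence step: one must exhibit a single $L^1$ majorant in the $(\zeta_k)_k$ variables, uniform in $\varepsilon$, that absorbs simultaneously the Aronson upper bound on $p^\varepsilon$ and the sharply concentrated kernel $\prod_k K_k^\varepsilon$; the remaining continuity statements used to identify the limit follow from the $C_b^{1,2}$ regularity of $b,\sigma,a^{-1}$.
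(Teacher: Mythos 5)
Your proposal follows essentially the same route as the paper: write both expectations as integrals of the product of transition densities against the Gaussian kernels hidden in $\psi^\varepsilon$, split each $\zeta_k$ along the range of $L_k^*$ and $\ker L_k$ (the paper's completion of $L_k$ into an orthonormal basis $P_k$), dominate uniformly in $\varepsilon$ via Aronson's estimates (plus Lemma~\ref{randvar}), let the kernel concentrate at $L_k\zeta_k=v_k$, and conclude by the Bayes/disintegration formula after cancelling the normalizing constants in the ratio. The only difference is cosmetic: the paper makes the concentration step rigorous through the explicit rescaling $\zeta_k^{1:m_k}=v_k+\sqrt{\varepsilon}\,\xi_k^{1:m_k}$ before applying dominated convergence, a change of variables your ``Dirac mass'' argument leaves implicit but which is exactly what is needed to get pointwise convergence of a dominated integrand.
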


\begin{proof}
Let us recall
\begin{equation*}
C^\varepsilon\psi^\varepsilon=\prod_{k=1}^N\varepsilon^{-\frac{m_k}{2}}\eta_k^\varepsilon(x_{T_k-\varepsilon})\exp\{-\frac{\|\beta_{T_k-\varepsilon}^k(x_{T_k-\varepsilon})(L_kx_{T_k-\varepsilon}-v_k)\|^2}{2\varepsilon}\}
\end{equation*}
where for all $z\in \re ^n$
\begin{equation*}
\eta^\varepsilon_k(z)=\sqrt{\det(A^k_{T_k-\varepsilon}(z))}
\end{equation*}
Let introduce Aronson's estimates (see \emph{e.g.} \cite{Ar}, \cite{St} or \cite{DrMz}) that gives bounds for the transition density. If $p_{s,t}(u,.)$  (with $s<t$) is the density of $x_t$ knowing that $x_s=u$, we have for all $z$
\begin{equation*}
\mu (t-s)^{- \frac n 2}e^{-\frac{-\lambda \|z-u\|^2}{t-s}}<p_{s,t}(u,z)<M (t-s)^{- \frac n 2}e^{-\frac{-\Lambda \|z-u\|^2}{t-s}}
\end{equation*}
The transition densities allow to expand the density $q^\varepsilon$ of $(x_{t_{1,1}},\dots,x_{t_{N,M_N}},x_{T_1-\varepsilon},\dots,x_{T_N-\varepsilon})$ 
\begin{equation*}
q^\varepsilon(z_{1,1},\dots,z_{t_{N,M_N}},\zeta_1,\dots,\zeta_N)\\=p_{0,t_{1,1}}(u,z_{1,1})\dots p_{t_{1,M_1},T_1-\varepsilon}(z_{0,M_0},\zeta_1)\dots p_{t_{N,M_{N}},T_{N}-\varepsilon}(z_{N,M_{N}},\zeta_{N})
\end{equation*}Then we set for $\varepsilon\geq 0$
\begin{multline*}
\Phi_g^\varepsilon(\zeta_1,\dots,\zeta_N)=\e[g(x_{t_{1,1}},\dots,x_{t_{N,M_N}})|(x_{T_k-\varepsilon}=\zeta_k)_k]\\=\int g(z_{1,1},\dots,z_{t_{N,M_N}})q^\varepsilon(z_{1,1},\dots,z_{t_{N,M_N}},\zeta_1,\dots,\zeta_N)\prod _{j} \ud z_j
\end{multline*}This application is continuous according to Aronson's estimates. From this expression it comes
\begin{equation*}
\frac{I_g^\varepsilon}{I_1^\varepsilon}:=\frac{\e[g(x_{t_{1,1}},\dots,x_{t_{N,M_N}})C^\varepsilon\psi^\varepsilon]}{\e[C^\varepsilon\psi^\varepsilon]}=\frac{ C^\varepsilon\int \Phi_g^\varepsilon\prod_k\eta_k^\varepsilon\\exp \{ -\frac{\|\beta^k_{T_k-\varepsilon}(L_k\zeta_k-v_k)\|^2}{2 \varepsilon} \}\ud\zeta_k}{C^\varepsilon\int \Phi_1^\varepsilon\prod_k\eta_k^\varepsilon\exp \{-\frac{\|\beta^k_{T_k-\varepsilon}(L_k\zeta_k-v_k)\|^2\}}{2 \varepsilon}\}\ud \zeta_k}
\end{equation*}
We recall that the rows of each matrix $L_k$ form an orthonormal family. We now complete arbitrarily each family into an orthonormal basis of $\re^n$. We denote $P_k$ an arbitrary matrix whose first rows are given by $L_k$.
Then we make a basis change with respect to those matrices $P_k$ for each $\zeta_k$. Thus
\begin{equation*}
I_g^\varepsilon=C^\varepsilon\int \Phi_g^\varepsilon(P_1^{-1}\zeta_1,\dots,P_N^{-1}\zeta_N)\prod_k\eta_k^\varepsilon(P_k^{-1}\zeta_k)\exp\{-\frac{\|\beta^k_{T_k-\varepsilon}(\zeta_k^{1:m_k}-v_k)}{2 \varepsilon}\}\ud \zeta_k
\end{equation*}denoting $\zeta_k^{i:j}$ the vector composed by the coordinates from $i^\textrm{th}$ to $j^\textrm{th}$ one of $\zeta_k$. We now make a second change 
\begin{equation*}
\left\{
\begin{array}{ll}
\zeta_k^{1:m_k}=\sqrt \varepsilon \xi_k^{1:m_k}+v_k\\
\zeta_k^{m_k+1:n}=\xi_k^{m_k+1:n}
\end{array}\right.
\end{equation*}
So that
\begin{multline*}
I_g^\varepsilon=\left(\prod_k \varepsilon^{- \frac{m_k}{2}}\right) \int \Phi_g^\varepsilon(P_1^{-1}\zeta_1,\dots,P^{-1}_N\zeta
_N)\prod_k \eta_k^\varepsilon\exp\{-\frac{\|\beta^k_{T_k-\varepsilon}(\zeta_k^{1:m_k}-v_k)\|^2}{2 \varepsilon}\}\ud \zeta_k\\
=\int \Phi_g^\varepsilon\left(P_1^{-1}\left(\begin{smallmatrix}\sqrt \varepsilon \xi_1^{1:m_k}+v_1\\\xi_1^{m_k+1:n}\end{smallmatrix}\right),\dots,(P_1^{-1}\left(\begin{smallmatrix}\sqrt \varepsilon \xi_N^{1:m_k}+v_N\\\xi_N^{m_k+1:n}\end{smallmatrix}\right)\right)\prod_k\eta_k^\varepsilon  \exp\{-\frac{\|\beta_{T_k-\varepsilon}^k\xi_k^{1:m_k}\|^2}{2 }\}\ud \xi_k
\end{multline*}
We now use Aronson's estimates and Lemma~\ref{randvar} to get an integrable uniform upper bound for $q^\varepsilon$ when $0<\varepsilon<\varepsilon_0$. Thanks to Lebesgue's theorem we obtain the convergence for the last term
\begin{equation*}
I_g^\varepsilon\stackrel{\varepsilon\to 0}{\longrightarrow}\int \Phi^0_g\left(P_1^{-1}\left(\begin{smallmatrix}v_1\\\xi_1^{m_k+1:n}\end{smallmatrix}\right),\dots,(P_1^{-1}\left(\begin{smallmatrix}v_N\\\xi_N^{m_k+1:n}\end{smallmatrix}\right)\right)\prod_k\eta_k^\varepsilon  \exp\{-\frac{\|\beta_{T_k}^k\xi_k^{1:m_k}\|^2}{2 }\}\ud \xi_k 
\end{equation*}
We then integrate with respect to the $\xi_k^{1:m_k}$
\begin{equation*}
I_g^\varepsilon=\int \Phi^0_g\left(P_1^{-1}\left(\begin{smallmatrix}v_1\\\xi_1^{m_k+1:n}\end{smallmatrix}\right),\dots,(P_1^{-1}\left(\begin{smallmatrix}v_N\\\xi_N^{m_k+1:n}\end{smallmatrix}\right)\right)\prod_k\ud \xi_k^{m_k+1:n} 
\end{equation*}
Finally
\begin{equation}
\lim_{\varepsilon\to 0}\frac{I_g^0}{I_1^0}=\frac{\int \Phi^0_g\left(P_1^{-1}\left(\begin{smallmatrix}v_1\\\xi_1^{m_k+1:n}\end{smallmatrix}\right),\dots,(P_1^{-1}\left(\begin{smallmatrix}v_N\\\xi_N^{m_k+1:n}\end{smallmatrix}\right)\right)\prod_k\ud \xi_k^{m_k+1:n}}{\int \Phi^0_1\left(P_1^{-1}\left(\begin{smallmatrix}v_1\\\xi_1^{m_k+1:n}\end{smallmatrix}\right),\dots,(P_1^{-1}\left(\begin{smallmatrix}v_N\\\xi_N^{m_k+1:n}\end{smallmatrix}\right)\right)\prod_k\ud \xi_k^{m_k+1:n}}
\end{equation}We conclude thanks to the Bayes formula.
\end{proof}

\begin{lem}\label{convint}
Let $y$ be solution of Equation~(\ref{bridge}). Then almost surely for all $1\leq k\leq N$ the following integral are absolutely convergent
\begin{multline}\label{int}
\int_{T_k-\varepsilon_k}^{T_k}\frac{(L_ky_t-v_k)^*A_t^k(y_t)b_t(y_t)\ud t}{T_k-t}+\frac{(L_ky_t-v_k)^*\ud A_t^k(y_t) (L_ky_t-v_k)}{2(T_k-t)}\\+\sum_{1\leq i,j\leq m_k}\frac{\ud \big\langle A_{i,j}^k(y_.),(L_ky_.-v_k)_i(L_ky_.-v_k)_j\big\rangle_t}{2(T_k-t)}
\end{multline}
\end{lem}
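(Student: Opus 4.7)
The strategy is to combine the a.s.\ pathwise estimate of Lemma~\ref{cvbridge},
\begin{equation*}
\|L_k y_t - v_k\| = \|L_k(y_t-u_k)\| \leq C_k(\omega)(T_k-t)\log\log\!\big[(T_k-t)^{-1}+e\big],
\end{equation*}
with Itô's formula applied to $A^k(y_t)$, and then to control each of the three pieces in (\ref{int}) separately. Since each integrand carries a prefactor $(T_k-t)^{-1}$, we need to show that the numerators vanish at least linearly in $T_k-t$ (up to $\log\log$ factors) for the Lebesgue parts, and that the stochastic integrands lie in $L^2([T_k-\varepsilon_k,T_k],\ud t)$ a.s.

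For the first term, since $A^k_t$ and $b_t$ are bounded,
\begin{equation*}
\left|\frac{(L_ky_t-v_k)^*A_t^k(y_t)b_t(y_t)}{T_k-t}\right| \leq C\,\log\log\!\big[(T_k-t)^{-1}+e\big],
\end{equation*}
which is integrable on $(T_k-\varepsilon_k,T_k)$. For the second term, because $a^{-1}$ is $C^{1,2}_b$ by hypothesis, so is $A^k_{i,j}(t,z)=(L_k a_t(z)L_k^*)^{-1}_{i,j}$; I would apply Itô to get a decomposition
\begin{equation*}
\ud A^k_{i,j}(y_t) = \alpha^{i,j}_t\,\ud t + (\gamma^{i,j}_t)^*\,\ud\tilde{w}_t,
\end{equation*}
where $\gamma^{i,j}_t = \sigma_t^*\nabla_z A^k_{i,j}(y_t)$ is bounded, and $\alpha^{i,j}_t$ consists of bounded contributions from $\partial_t A^k_{i,j}$, $\nabla A^k_{i,j}\cdot b_t$ and $\tfrac{1}{2}\mathrm{Tr}(\nabla^2 A^k_{i,j}\,a_t)$, together with the singular-drift contribution $-\nabla A^k_{i,j}\cdot \sigma_t\beta_t^k (L_k y_t-v_k)/(T_k-t)$; thanks to Lemma~\ref{cvbridge} the latter is bounded by $C\log\log[(T_k-t)^{-1}+e]$. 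Consequently the drift part of the second integrand is dominated by
\begin{equation*}
\frac{\|L_k y_t - v_k\|^2\,(1+\log\log)}{T_k-t} \leq C\,(T_k-t)\,\big[\log\log((T_k-t)^{-1}+e)\big]^3,
\end{equation*}
which is integrable, while the stochastic integrand $(L_k y_t - v_k)^*\gamma^{i,j}_t(L_k y_t - v_k)/(T_k-t)$ is bounded by $C(T_k-t)[\log\log]^2$ and hence in $L^2$.

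For the covariation term, I would apply Itô's product rule to $(L_k y - v_k)_i(L_k y - v_k)_j$; its martingale coefficient equals $(L_k y_t - v_k)_j (L_k\sigma_t)_{i,\cdot} + (L_k y_t - v_k)_i (L_k\sigma_t)_{j,\cdot}$, of norm $O(\|L_k y_t - v_k\|)$. Pairing it against the bounded coefficient $\gamma^{i,j}_t$ of $A^k_{i,j}(y_\cdot)$, the density of the covariation is of order $(T_k-t)\log\log[(T_k-t)^{-1}+e]$, so dividing by $T_k-t$ still yields an integrable $\log\log$ bound.

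The main technical obstacle is the bookkeeping for the singular-drift contribution in the Itô expansion of $A^k(y_t)$: this contribution is itself of order $(T_k-t)^{-1}\|L_k y_t - v_k\|$, and a priori threatens to destroy integrability when paired inside the second integrand. The key observation is that this singular factor is always multiplied by $(L_k y_t - v_k)^*(\,\cdot\,)(L_k y_t - v_k)$ (an extra squared vanishing), so Lemma~\ref{cvbridge} absorbs the singularity and leaves only a $(T_k-t)\,[\log\log]^3$ remainder. Everything else is routine once the decomposition of $\ud A^k(y_t)$ is in place.
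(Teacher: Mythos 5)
Your proposal follows essentially the same route as the paper's proof: an Itô decomposition of $\ud A^k(y_t)$ into a bounded $\ud t$ part, a bounded martingale coefficient and the singular bridge-drift contribution proportional to $(L_ky_t-v_k)/(T_k-t)$ (this is exactly the paper's Identity (\ref{aexpansion})), combined with the a.s. rate at which $L_ky_t$ approaches $v_k$, followed by a term-by-term integrability check, including the computation of $\ud(L_ky_t-v_k)$ for the covariation term. One caution: you use the bound of Lemma~\ref{cvbridge} in the linear form $\|L_k(y_t-u_k)\|\le C(T_k-t)\log\log[(T_k-t)^{-1}+e]$, but what the appendix actually establishes (Lemma~\ref{monobridconv}, via Dambis--Dubins--Schwarz and the law of the iterated logarithm) is $\|L_k(y_t-u_k)\|^2\le C(T_k-t)\log\log[(T_k-t)^{-1}+e]$, i.e. only a square-root rate, and this weaker form is the one the paper uses in this proof. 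Fortunately your argument is robust under this correction: the first and third terms become $O\big(\sqrt{\log\log((T_k-t)^{-1}+e)/(T_k-t)}\big)$, the drift part of the second term (including the singular-drift contribution you rightly single out) becomes $O\big([\log\log]^{3/2}(T_k-t)^{-1/2}\big)$, and the stochastic integrand becomes $O(\log\log)$, all still integrable, respectively square-integrable, near $T_k$; so only your stated exponents need adjusting, not the structure of the proof.
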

\begin{proof}
We reduce the study without loss of generality to that of
\begin{equation*}
\ud y_t=b_t(y_t)\ud t+\sigma_t(y_t)\ud\tilde{w}_t-\sigma_t(y_t)\beta_t(y_t)\frac{Ly_t-v}{T-t}\mathbf{1}_{(T-\varepsilon_1,T)}(t)\ud t
\end{equation*}
We then treat integrability for each term.

For the first term, since $b$ and $\beta$ are bounded, we use Lemma~\ref{cvbridge} to get
\begin{equation*}
\left\|\frac{Ly_t-v}{T-t}\right\|\leq C\sqrt{\frac{\log\log \big((T-t)^{-1}+e\big)}{T-t}}
\end{equation*}where $C$ is a positive random variable. Now for all positive $\alpha$, we have $\log\log x\leq x^\alpha$. Then for $\alpha$ small enough, we obtain integrability of righthandside.

For the second term in (\ref{int}), we recall that for all $z$ we have $A_t(z)=\beta_t(z)^*\beta_t(z)=(La_t(z)L^*)^{-1}$, hence
\begin{equation*}
\ud A_t=p_t\ud t+q_t\ud \tilde{w}_t+r_t\frac{Ly_t-v}{T-t}\ud t
\end{equation*}where $p$, $q$ and $r$ are bounded adapted processes. So that even if it means changing $p$ $q$ and $r$
\begin{equation}\label{aexpansion}
\frac{(Ly_t-v)^*\ud A_t(Ly_t-v)}{T-t}= \frac{\|Ly_t-v\|^2}{T-t}p_t\ud t+\frac{\|Ly_t-v\|^2}{T-t}q_t\ud w_t+\frac{\|Ly_t-v\|^2}{(T-t)^2}r_t\ud t
\end{equation}Using Lemma~\ref{cvbridge}, we obtain that the quantities $\frac{\|Ly_t-v\|^2}{T-t}$, $\frac{\|Ly_t-v\|^2}{(T-t)^2}$ and $\frac{\|Ly_t-v\|^4}{(T-t)^2}$ are integrable in a left neighboorhood of $T$.

For the last term in (\ref{int}), we use Itô's formula and the fact that $L\sigma_t(z)\beta_t(z)=I_d$, so that on $(T-\varepsilon_k,T_k)$
\begin{equation*}
\ud (Ly_t-v)=L[b_t\ud t+\sigma_t\ud\tilde{w}_t]-\frac{Ly_t-v}{T-t}\ud t
\end{equation*}Hence
\begin{equation*}
\ud \big\langle A_{i,j},(Ly_.-v)_i(Ly_.-v)_j\big\rangle_t\leq \|Ly_t-v\|p_t\ud t
\end{equation*}where $p$ is the same bounded adapted process given above. Finally
\begin{equation*}
\sum_{i,j}\frac{\ud \big\langle A_{i,j},(Ly_.-v)_i(Ly_.-v)_j\big\rangle_t}{T-t}\leq \frac{\|Ly_t-v\|p_t}{T-t}\ud t
\end{equation*}even if it means changing $p$, and this last term is integrable.
\end{proof}

\begin{lem}\label{randvar}
Let $(Z_j)_{1\leq j\leq K}$ be a family of random $m_j$-dimensional variables and let $(g_j : \re^{m_j}\to \re)_{1\leq j\leq K}$ be a family of densities. Then the function
\begin{equation*}
\begin{array}{cl}
\prod_{j=1}^K\re^{m_j} &\rightarrow \re\\
(v_j)_j &\mapsto \e\left[\prod_jg_j(Z_j-v_j)\right]
\end{array}
\end{equation*}is the density of the family $(V_j=W_j+Z_j)_{1\leq j\leq K}$ where each of the $W_j$ whose law is given by $g_j$ is independent with respect to the $(Z_j)_{1\leq j\leq K}$ and $(W_k)_{k\neq j}$.
\end{lem}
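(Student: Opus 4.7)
The plan is to identify the candidate function as the joint density of $(V_j)_{1\leq j\leq K}$ by testing it against an arbitrary bounded Borel $\varphi$ on $\prod_{j=1}^K \re^{m_j}$: two probability densities that agree when integrated against every bounded Borel test function must coincide, so it is enough to verify
\[
\e[\varphi(V_1,\ldots,V_K)] = \int \varphi(v_1,\ldots,v_K)\,\e\Bigl[\prod_j g_j(v_j-Z_j)\Bigr]\prod_j dv_j.
\]

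The key step is conditioning on $(Z_j)_j$ together with Fubini. The hypothesis that each $W_j$ is independent of $(Z_j)_j$ and of $(W_k)_{k\neq j}$ implies that the whole family $(W_j)_j$ is jointly independent of $(Z_j)_j$ and that the $W_j$ are mutually independent with marginal densities $g_j$. Conditionally on $(Z_j)_j=(z_j)_j$, the vector $(V_1,\ldots,V_K)=(z_1+W_1,\ldots,z_K+W_K)$ therefore has the product conditional density $(v_j)\mapsto\prod_j g_j(v_j-z_j)$ with respect to Lebesgue measure. Integrating against $\varphi$, applying the tower property, and swapping expectation and integral via Fubini (legitimate since $\varphi$ is bounded and the $g_j$ integrate to one) yields the displayed identity, so the joint density is $(v_j)\mapsto\e[\prod_j g_j(v_j-Z_j)]$.

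The only subtlety is that the computation naturally produces $g_j(v_j-Z_j)$, while the statement writes $g_j(Z_j-v_j)$. This is harmless in the applications of the paper, where the relevant $g_j$ are centered Gaussian densities and hence symmetric, so the two expressions coincide; alternatively, replacing each $W_j$ by $-W_j$ (whose density is $x\mapsto g_j(-x)$) enforces the sign convention exactly. Apart from this cosmetic point there is no genuine obstacle: the argument is a one-line conditioning followed by a trivial change of variables.
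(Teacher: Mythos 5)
Your proof is correct and takes essentially the same route as the paper: the paper also tests against a bounded test function, swaps expectation and integral by Fubini, and performs the change of variables $w_j=Z_j-v_j$, which is just your conditioning argument written out directly. Your remark about the sign ($g_j(Z_j-v_j)$ versus $g_j(v_j-Z_j)$) is apt -- the paper's own proof contains the same harmless slip, and it is resolved exactly as you say, since the densities used in the applications are symmetric centered Gaussians.
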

\begin{proof}
Let $f$ be a bounded continuous function
\begin{equation*}
\int f\big((v_j)_j\big)\e\left[\prod_j g_j (Z_j-v_j)\right]\prod_j \ud v_j=\e\left[\int f\big((v_j)_j\big)\prod_j g_j (Z_j-v_j)\right]\prod_j \ud v_j
\end{equation*}Then we make the change of variables $w_j=Z_j-v_j$ for all $j$
\begin{equation*}
\int f\big((v_j)_j\big)\e\left[\prod_j g_j (Z_j-v_j)\right]\prod_j \ud v_j=
\e\left[\int f\big((w_j+Z_j)_j\big)\prod_j g_j (w_j)\right]\prod_j \ud w_j
\end{equation*}Hence
\begin{equation*}
\e\left[\int f\big((w_j+Z_j)_j\big)\prod_j g_j (w_j)\right]\prod_j \ud w_j=\e\left[f\big((W_j+Z_j)_j\big)\right]
\end{equation*}where $W_j$ admits $g_j$ as density.
\end{proof}
\bibliographystyle{plain}
\bibliography{biblio}
\end{document}